\documentclass{amsart}

\makeatletter
\@namedef{subjclassname@2020}{%
  \textup{2020} Mathematics Subject Classification}
 
\makeatother % Cambiar el ano de la MSC, de 2010 a 2020

\usepackage{amsthm,amssymb,amsfonts,latexsym,mathtools,thmtools,amsmath,lscape}
\usepackage[T1]{fontenc}
\usepackage{tikz-cd} 
\usepackage{enumitem} % Customization of items.
\usepackage{longtable}
\usepackage{multirow}
\usepackage{caption}
\usepackage{mathrsfs} 
\usepackage{hyperref} 
\usepackage{hyperref}
\usepackage{float}
\usepackage[all]{xy}
\usepackage{booktabs}

\hypersetup{
    colorlinks=true,
    linkcolor=blue,
    filecolor=blue,      
    urlcolor=blue,
    linktocpage=true
}

\newtheorem{theorem}{Theorem}[section]

\newtheorem{proposition}[theorem]{Proposition}

\theoremstyle{definition}
\newtheorem{definition}[theorem]{Definition}

\newtheorem{remark}[theorem]{Remark}

\newtheorem{example}[theorem]{Example}

\theoremstyle{remark}

\numberwithin{equation}{section}

\begin{document}

\title[On the smoothness of 3-dimensional skew polynomial rings]{On the smoothness of 3-dimensional \\ skew polynomial rings}

\author{Andr\'es Rubiano}
\address{Universidad Distrital Francisco Jos\'e de Caldas}
\curraddr{Campus Universitario}
\email{aarubianos@udistrital.edu.co} 

\author{Armando Reyes}
\address{Universidad Nacional de Colombia - Sede Bogot\'a}
\curraddr{Campus Universitario}
\email{mareyesv@unal.edu.co}

\thanks{The second author was supported by Faculty of Science, Universidad Nacional de Colombia - Sede Bogot\'a, Colombia [grant number 65488].}

\subjclass[2020]{16E45, 16S30, 16S32, 16S36, 16S38, 16S99, 16T05, 58B34}

\keywords{Differentially smooth algebra, integrable calculus, skew polynomial ring}

\date{}

\dedicatory{Dedicated to Professor Oswaldo Lezama on the Occasion of His 70th Birthday}

\begin{abstract} 

This paper is part of a series of papers in which we have investigated the differential smoothness of families of noncommutative algebras. Here, we consider this topic for the family 3-dimensional skew polynomial rings characterized by Bell and Smith \cite{BellSmith1990}.

\end{abstract}

\maketitle

%\tableofcontents 

\section{Introduction}

Brzezi{\'n}ski and Sitarz \cite{BrzezinskiSitarz2017} defined a notion of smoothness of algebras, termed {\em differential smoothness} due to the use of differential graded algebras of a specified dimension that admits a noncommutative version of the Hodge star isomorphism, which considers the existence of a top form in a differential calculus over an algebra together with a string version of the Poincar\'e duality realized as an
isomorphism between complexes of differential and integral forms. \textquotedblleft The idea behind the {\em differential smoothness} of algebras is rooted in the observation that a classical smooth orientable manifold, in addition to de Rham complex of differential forms, admits also the complex of {\em integral forms} isomorphic to the de Rham complex \cite[Section 4.5]{Manin1997}. The de Rham differential can be understood as a special left connection, while the boundary operator in the complex of integral forms is an example of a {\em right connection}\textquotedblright\ \cite[p. 413]{BrzezinskiSitarz2017}. This notion of smoothness is different and more constructive than other homological smoothnesses considered in the literature (for more details on the subject, see \cite{Brzezinski2008, Brzezinski2014, CuntzQuillen1995, Grothendieck1964, Krahmer2012, Schelter1986, StaffordZhang1994, VandenBergh1998}).

Since its introduction, Brzezi{\'n}ski and several authors (e.g. \cite{Brzezinski2015} - \cite{BrzezinskiSitarz2017}, \cite{DuboisViolette1988, DuboisVioletteKernerMadore1990, Karacuha2015, KaracuhaLomp2014, ReyesSarmiento2022}) have characterized the differential smoothness of different families of noncommutative algebras such as the following: quantum spheres, noncommutative torus, the coordinate algebras of the quantum group $SU_q(2)$, the noncommutative pillow algebra, the quantum polynomial algebras, Hopf algebra domains, families of Ore extensions, some 3-dimensional skew polynomial algebras, diffusion algebras in three generators, and noncommutative coordinate algebras of deformations of examples of classical orbifolds. Recently, in a series of papers \cite{Rubiano2026} - \cite{RubianoReyes2025BAG}, the authors considered the question on the differential smoothness of different types of algebra appearing in noncommutative algebraic geometry. In this paper, the objects of interest are the {\em 3-dimensional skew polynomial rings} introduced by Bell and Smith \cite{BellSmith1990}, which have been studied from the point of view of ring theory and noncommutative algebraic geometry (e.g. \cite{ChaconReyes2025, Fajardoetal2020, NinoReyes2025, RedmanPhDThesis1996, Redman1999, ReyesRodriguez2021, ReyesSuarez20173D, Rosenberg1995}, and references therein). 

The article is organized as follows. In Section \ref{PreliminariesDifferentialsmoothnessofbi-quadraticalgebras}, we consider the preliminaries on differential smoothness of algebras and 3-dimensional skew polynomial rings, in order to set up notation and render this paper self-contained. Section \ref{Differentialandintegralcalculusbi-quadraticalgebras} contains the original results of the paper: Theorem \ref{Firsttheoremsmoothness3-dimensionalalgebras} formulates sufficient conditions to guarantee that a 3-dimensional skew polynomial ring is differentially smooth, and Theorem \ref{Secondtheoremsmoothness3-dimensionalalgebras} establishes sufficient conditions to assert that there are no one-dimensional connected integrable calculi over these rings. Section \ref{Futurework} presents some ideas for possible future work.

Throughout the paper, $\mathbb{N}$ denotes the set of natural numbers including zero. The word ring means an associative ring with identity not necessarily commutative. All vector spaces and algebras (always associative and with unit) are over a fixed field $\Bbbk$. $\Bbbk^{\times}$ denotes the non-zero elements of $\Bbbk$. As usual, the symbols $\mathbb{R}$ and $\mathbb{C}$ denote the fields of real and complex numbers, respectively. As usual, $|\square|$ means the number of elements of the set $\square$. ${\rm Aut}(R)$ is the set of automorphisms of the ring $R$.

\section{Definitions and Preliminaries}\label{PreliminariesDifferentialsmoothnessofbi-quadraticalgebras}

\subsection{Differential smoothness of algebras}\label{DefinitionsandpreliminariesDSA}

% Following Giachetta et al. \cite[Section 1.6]{Giachettaetal2005}, a {\em graded algebra} $\Omega$ over $\Bbbk$ is defined as a direct sum $\Omega = \bigoplus\limits_{n\in \mathbb{N}} \Omega^{n}$ of $\Bbbk$-modules, provided with an associative multiplication law $a \wedge b,\ a, b \in \Omega$, such that $a \wedge b\in \Omega^{|a| + |b|}$, where $|a|$ denotes the degree of an element $a \in \Omega^{|n|}$. Notice that $\Omega^0$ is a (noncommutative) $\Bbbk$-algebra $A$, while $\Omega^{n > 0}$ are $A$-bimodules and $\Omega$ is an $(A-A)-$algebra. A graded algebra $\Omega$ is said to be {\em graded commutative} if $a \wedge b = (-1)^{|a||b|} b \wedge a$ for elements $a, b \in \Omega$.

% A graded algebra $\Omega = \bigoplus_{n \in \mathbb{N}} \Omega^n$, where $\Omega^{0} \cong A$, is said to be a {\em differential calculus} over $A$ if it is a cochain complex of $\Bbbk$-modules (also known as {\em de Rham complex} of the differential graded algebra $(\Omega, d)$)
% \[
% 0 \to \Bbbk \to A \xrightarrow{d} \Omega^{1}  \xrightarrow{d} \dotsb \xrightarrow{d} \Omega^{n} \xrightarrow{d} \dotsb, 
% \]

% with respect to a coboundary operator $d$ which obeys the {\em graded Leibniz's rule} $d(a \wedge b) = d(a) \wedge b + (-1)^{|{\rm deg}(a)|} a \wedge d(b)$, for all pairs of homogeneous elements $a, b \in \Omega$. In particular, $d: A \to \Omega^{1}$ is a $\Omega^{1}$-valued derivation of a $\Bbbk$-algebra $A$.

We follow Brzezi\'nski and Sitarz's ideas on differential smoothness carried out in \cite[Section 2]{BrzezinskiSitarz2017} (see also \cite{Brzezinski2008, Brzezinski2014, BrzezinskiElKaoutitLomp2010}).

\begin{definition}[{\cite[Section 2.1]{BrzezinskiSitarz2017}}]
\begin{enumerate}
    \item [\rm (i)] A {\em differential graded algebra} is a non-negatively graded algebra $\Omega$ with the product denoted by $\wedge$ together with a degree-one linear map $d:\Omega^{\bullet} \to \Omega^{\bullet +1}$ that satisfies the graded Leibniz's rule and is such that $d \circ d = 0$. 
    
    \item [\rm (ii)] A differential graded algebra $(\Omega, d)$ is a {\em calculus over an algebra} $A$ if 
    $$
    \Omega^0 A = A \quad {\rm and} \quad \Omega^n A = A\ dA \wedge dA \wedge \dotsb \wedge dA \quad (dA \, \, {\rm appears} \, n-{\rm times}), 
    $$
    
    for all $n\in \mathbb{N}$ (this last is called the {\em density condition}). We write $(\Omega A, d)$ with $\Omega A = \bigoplus_{n\in \mathbb{N}} \Omega^{n}A$. By using the Leibniz's rule, it follows that 
    $$
    \Omega^n A = dA \wedge dA \wedge \dotsb \wedge dA\ A.
    $$
    
    A differential calculus $\Omega A$ is called {\em connected} if ${\rm ker}(d\mid_{\Omega^0 A}) = \Bbbk$.
    
    \item [\rm (iii)] A calculus $(\Omega A, d)$ is said to have {\em dimension} $n$ if $\Omega^n A\neq 0$ and $\Omega^m A = 0$ for all $m > n$. An $n$-dimensional calculus $\Omega A$ {\em admits a volume form} if $\Omega^n A$ is isomorphic to $A$ as a left and right $A$-module. 
\end{enumerate}
\end{definition}

The existence of a right $A$-module isomorphism means that there is a free generator, say $\omega$, of $\Omega^n A$ (as a right $A$-module), i.e. $\omega \in \Omega^n A$, such that all elements of $\Omega^n A$ can be uniquely expressed as $\omega a$ with $a \in A$. If $\omega$ is also a free generator of $\Omega^n A$ as a left $A$-module, it is said to be a {\em volume form} on $\Omega A$.

The right $A$-module isomorphism $\Omega^n A \to A$ corresponding to a volume form $\omega$ is denoted by $\pi_{\omega}$, i.e.
\begin{equation}\label{BrzezinskiSitarz2017(2.1)}
\pi_{\omega} (\omega a) = a, \quad {\rm for\ all}\ a\in A.
\end{equation}

By using that $\Omega^n A$ is also isomorphic to $A$ as a left $A$-module, any free generator $\omega $ induces an algebra endomorphism $\nu_{\omega}$ of $A$ by the formula
\begin{equation}\label{BrzezinskiSitarz2017(2.2)}
    a \omega = \omega \nu_{\omega} (a).
\end{equation}

Note that if $\omega$ is a volume form, then $\nu_{\omega}$ is an algebra automorphism.

Next, we recall the key ingredients of the {\em integral calculus} on $A$ as dual to its differential calculus. 

Let $(\Omega A, d)$ be a differential calculus on $A$. The space of $n$-forms $\Omega^n A$ is an $A$-bimodule. Consider $\mathcal{I}_{n}A$ the right dual of $\Omega^{n}A$, the space of all right $A$-linear maps $\Omega^{n}A\rightarrow A$, that is, $\mathcal{I}_{n}A := {\rm Hom}_{A}(\Omega^{n}(A),A)$. Notice that each of the $\mathcal{I}_{n}A$ is an $A$-bimodule with the actions given by
\begin{align*}
    (a\cdot\phi\cdot b)(\omega)=a\phi(b\omega),\quad {\rm for\ all}\ \phi \in \mathcal{I}_{n}A,\ \omega \in \Omega^{n}A\ {\rm and}\ a,b \in A.
\end{align*}

The direct sum of all the $\mathcal{I}_{n}A$, that is, $\mathcal{I}A = \bigoplus_{n} \mathcal{I}_n A$, is a right $\Omega A$-module with action given by
\begin{align}\label{BrzezinskiSitarz2017(2.3)}
    (\phi\cdot\omega)(\omega')=\phi(\omega\wedge\omega'),\quad {\rm for\ all}\ \phi\in\mathcal{I}_{n + m}A, \ \omega\in \Omega^{n}A \ {\rm and} \ \omega' \in \Omega^{m}A.
\end{align}

\begin{definition}[{\cite[Definition 2.1]{Brzezinski2008}}]
A {\em divergence} (also called {\em hom-connection}) on $A$ is a linear map $\nabla: \mathcal{I}_1 A \to A$ such that
\begin{equation}\label{BrzezinskiSitarz2017(2.4)}
    \nabla(\phi \cdot a) = \nabla(\phi) a + \phi(da), \quad {\rm for\ all}\ \phi \in \mathcal{I}_1 A \ {\rm and} \ a \in A.
\end{equation}  
\end{definition}

Note that a divergence can be extended to the whole of $\mathcal{I}A$, 
\[
\nabla_n: \mathcal{I}_{n+1} A \to \mathcal{I}_{n} A,
\]

by considering
\begin{equation}\label{BrzezinskiSitarz2017(2.5)}
\nabla_n(\phi)(\omega) = \nabla(\phi \cdot \omega) + (-1)^{n+1} \phi(d \omega), \quad {\rm for\ all}\ \phi \in \mathcal{I}_{n+1}(A)\ {\rm and} \ \omega \in \Omega^n A.
\end{equation}

By putting together (\ref{BrzezinskiSitarz2017(2.4)}) and (\ref{BrzezinskiSitarz2017(2.5)}), we get the Leibniz's rule 
\begin{equation}
    \nabla_n(\phi \cdot \omega) = \nabla_{m + n}(\phi) \cdot \omega + (-1)^{m + n} \phi \cdot d\omega,
\end{equation}

for all elements $\phi \in \mathcal{I}_{m + n + 1} A$ and $\omega \in \Omega^m A$ \cite[Lemma 3.2]{Brzezinski2008}. In the case $n = 0$, if ${\rm Hom}_A(A, M)$ is canonically identified with $M$, then $\nabla_0$ reduces to the classical Leibniz's rule.

\begin{definition}[{\cite[Definition 3.4]{Brzezinski2008}}]
The right $A$-module map 
\[
F = \nabla_0 \circ \nabla_1: {\rm Hom}_A(\Omega^{2} A, M) \to M
\] 

is called a {\em curvature} of a hom-connection $(M, \nabla_0)$. $(M, \nabla_0)$ is said to be {\em flat} if its curvature is the zero map, that is, if $\nabla \circ \nabla_1 = 0$. This condition implies that $\nabla_n \circ \nabla_{n+1} = 0$ for all $n\in \mathbb{N}$.
\end{definition}

$\mathcal{I} A$ together with the $\nabla_n$ form a chain complex called the {\em complex of integral forms} over $A$. The cokernel map of $\nabla$, that is, $\Lambda: A \to {\rm Coker} \nabla = A / {\rm Im} \nabla$ is said to be the {\em integral on $A$ associated to} $\mathcal{I}A$.

Given a left $A$-module $X$ with action $a\cdot x$ for all $a\in A,\ x \in X$, and an algebra automorphism $\nu$ of $A$, the notation $^{\nu}X$ stands for $X$ with the $A$-module structure twisted by $\nu$, i.e. with the $A$-action $a\otimes x \mapsto \nu(a)\cdot x $.

The following definition of an \textit{integrable differential calculus} seeks to portray a version of Hodge star isomorphisms between the complex of differential forms of a differentiable manifold and a complex of dual modules of it \cite[p. 112]{Brzezinski2015}. %An integrable calculus can be characterized by the existence of a bimodule complex $(\mathcal{I}_{\bullet} A, \nabla)$ known as the {\em complex of integral forms} isomorphic to the de Rham complex $(\Omega A, d)$.

\begin{definition}[{\cite[Definition 2.1]{BrzezinskiSitarz2017}}]
An $n$-dimensional differential calculus $(\Omega A, d)$ is said to be {\em integrable} if $(\Omega A, d)$ admits a complex of integral forms $(\mathcal{I}A, \nabla)$, for which there exist an algebra automorphism $\nu$ of $A$ and $A$-bimodule isomorphisms \linebreak $\Theta_k: \Omega^{k} A \to ^{\nu} \mathcal{I}_{n-k}A$, $k = 0, \dotsc, n$, rendering commmutative the following diagram:
\[
\begin{tikzcd}
A \arrow{r}{d} \arrow{d}{\Theta_0} & \Omega^{1} A \arrow{d}{\Theta_1} \arrow{r}{d} & \Omega^2 A  \arrow{d}{\Theta_2} \arrow{r}{d} & \dotsb \arrow{r}{d} & \Omega^{n-1} A \arrow{d}{\Theta_{n-1}} \arrow{r}{d} & \Omega^n A  \arrow{d}{\Theta_n} \\ ^{\nu} \mathcal{I}_n A \arrow[swap]{r}{\nabla_{n-1}} & ^{\nu} \mathcal{I}_{n-1} A \arrow[swap]{r}{\nabla_{n-2}} & ^{\nu} \mathcal{I}_{n-2} A \arrow[swap]{r}{\nabla_{n-3}} & \dotsb \arrow[swap]{r}{\nabla_{1}} & ^{\nu} \mathcal{I}_{1} A \arrow[swap]{r}{\nabla} & ^{\nu} A
\end{tikzcd}
\]

The $n$-form $\omega:= \Theta_n^{-1}(1)\in \Omega^n A$ is called an {\em integrating volume form}. 
\end{definition}

The algebra of complex matrices $M_n(\mathbb{C})$ with the $n$-dimensional calculus generated by derivations presented by Dubois-Violette et al. \cite{DuboisViolette1988, DuboisVioletteKernerMadore1990}, the quantum group $SU_q(2)$ with the three-dimensional left covariant calculus developed by Woronowicz \cite{Woronowicz1987} and the quantum standard sphere with the restriction of the above calculus, are examples of algebras admitting integrable calculi. 

The following proposition shows that the integrability of a differential calculus can be defined without explicit reference to integral forms. This allows us to guarantee the integrability by considering the existence of finitely generator elements that allow to determine left and right components of any homogeneous element of $\Omega(A)$.

\begin{proposition}[{\cite[Theorem 2.2]{BrzezinskiSitarz2017}}]\label{integrableequiva} 
Let $(\Omega A, d)$ be an $n$-dimensional differential calculus over an algebra $A$. The following assertions are equivalent:
\begin{enumerate}
    \item [\rm (1)] $(\Omega A, d)$ is an integrable differential calculus.
    
    \item [\rm (2)] There exists an algebra automorphism $\nu$ of $A$ and $A$-bimodule isomorphisms 
    $$\Theta_k : \Omega^k A \rightarrow \ ^{\nu}\mathcal{I}_{n-k}A, \quad k =0, \ldots, n,
    $$
    
    such that for all $\omega'\in \Omega^k A$ and $\omega''\in \Omega^mA$, we have that 
    \begin{align*}
        \Theta_{k+m}(\omega'\wedge\omega'')=(-1)^{(n-1)m}\Theta_k(\omega')\cdot\omega''.
    \end{align*}
    
    \item [\rm (3)] There exists an algebra automorphism $\nu$ of $A$ and an $A$-bimodule map $\vartheta:\Omega^nA\rightarrow\ ^{\nu}A$ such that all left multiplication maps
    \begin{align*}
    \ell_{\vartheta}^{k}:\Omega^k A &\ \rightarrow \mathcal{I}_{n-k}A, \\
    \omega' &\ \mapsto \vartheta\cdot\omega', \quad k = 0, 1, \dotsc, n,
    \end{align*}
    where the actions $\cdot$ are defined by {\rm (}\ref{BrzezinskiSitarz2017(2.3)}{\rm )}, are bijective.
    
    \item [\rm (4)] $(\Omega A, d)$ has a volume form $\omega$ such that all left multiplication maps
    \begin{align*}
        \ell_{\pi_{\omega}}^{k}:\Omega^k A &\ \rightarrow \mathcal{I}_{n-k}A, \\
        \omega' &\ \mapsto \pi_{\omega} \cdot \omega', \quad k=0,1, \dotsc, n-1,
    \end{align*}
    
    where $\pi_{\omega}$ is defined by {\rm (}\ref{BrzezinskiSitarz2017(2.1)}{\rm )}, are bijective.
\end{enumerate}
\end{proposition}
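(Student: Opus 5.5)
The plan is to prove the cycle (1) $\Rightarrow$ (2) $\Rightarrow$ (3) $\Rightarrow$ (4) $\Rightarrow$ (1). For (1) $\Rightarrow$ (2), the given isomorphisms $\Theta_k$ commute with $d$ and $\nabla$, and by the density condition $\Omega^mA$ is spanned by products $a_0\,da_1\wedge\dotsb\wedge da_m$. So I will prove the multiplicativity formula by induction on $m$. The case $m=0$ is exactly right $A$-linearity of $\Theta_k$. For $\omega''=da$, I compare $\Theta_{k+1}(\omega'\wedge da)$ with $\Theta_k(\omega')\cdot da$. To do this I use $d(\omega' a)=d\omega'\,a+(-1)^k\omega'\wedge da$, commutativity of the diagram, and the Leibniz rule for $\nabla_n$ recalled after (\ref{BrzezinskiSitarz2017(2.5)}), namely $\nabla_n(\phi\cdot a)=\nabla_n(\phi)\cdot a+(-1)^{?}\phi\cdot da$. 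The sign bookkeeping should produce $(-1)^{n-1}$ for $m=1$, and associativity of the right $\Omega A$-action (\ref{BrzezinskiSitarz2017(2.3)}) then gives the general exponent $(n-1)m$.

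For (2) $\Rightarrow$ (3), I set $\vartheta=\Theta_0(1)\in\mathcal{I}_nA$. Since $\Theta_0$ is a bimodule map into ${}^{\nu}\mathcal{I}_nA$, $\vartheta$ is an $A$-bimodule map $\Omega^nA\to{}^{\nu}A$. The formula in (2) with $k=0$ gives $\Theta_m(\omega'')=\vartheta\cdot\omega''$ up to sign, so $\ell^m_\vartheta=\pm\Theta_m$ is bijective.

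For (3) $\Rightarrow$ (4), bijectivity of $\ell^n_\vartheta:\Omega^nA\to\mathcal{I}_0A\cong A$ shows that $\omega:=(\ell^n_\vartheta)^{-1}(1)$ is a free right generator with $\vartheta=\pi_\omega$. Twisted bimodularity of $\vartheta$, through $a\omega=\omega\nu(a)$, shows that $\omega$ also generates freely on the left, so it is a volume form.

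For (4) $\Rightarrow$ (1), define $\Theta_k=\ell^{n-k}_{\pi_\omega}$ up to signs and $\nu=\nu_\omega$ from (\ref{BrzezinskiSitarz2017(2.2)}). The remaining work is to construct $\nabla$. I define it on $\mathcal{I}_1A$ by transport: $\nabla(\pi_\omega\cdot\omega')=\pm\pi_\omega(d\omega')$ for $\omega'\in\Omega^{n-1}A$, which is well defined by bijectivity of $\ell^{n-1}$. I then verify (\ref{BrzezinskiSitarz2017(2.4)}) from the Leibniz rule for $d$, and show by the same transport that the extended $\nabla_n$ intertwine with $d$. Flatness of $\nabla$ follows from $d^2=0$.

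The main obstacle is (4) $\Rightarrow$ (1), especially checking that the transported $\nabla$ satisfies the divergence Leibniz rule and that the bimodule structures match under the $\nu$-twist. Both depend on a consistent choice of signs $(-1)^{(n-1)k}$, which I would fix first by requiring commutativity of the last square $\Omega^{n-1}A\to\Omega^nA$.
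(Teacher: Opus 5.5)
The paper gives no proof of this proposition; it is quoted from \cite[Theorem 2.2]{BrzezinskiSitarz2017}, so there is nothing internal to compare with. Your cycle (1)$\Rightarrow$(2)$\Rightarrow$(3)$\Rightarrow$(4)$\Rightarrow$(1) is the natural route and it works. That includes the sign $(-1)^{n-1}$ you predict in (1)$\Rightarrow$(2): it comes from $(-1)^{k}\Theta_{k+1}(\omega'\wedge da)=(-1)^{n-k-1}\Theta_k(\omega')\cdot da$.

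One slip needs fixing in (4)$\Rightarrow$(1). The choice $\Theta_k=\ell^{n-k}_{\pi_\omega}$ has the wrong source and target, since it maps $\Omega^{n-k}A$ to $\mathcal{I}_kA$. Instead take $\nu=\nu_\omega$ and $\Theta_k=(-1)^{(n-1)k}\ell^{k}_{\pi_\omega}$.

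You must also add the case $k=n$. Hypothesis (4) does not assume it, but it holds because $\pi_\omega\colon\Omega^nA\to A\cong\mathcal{I}_0A$ is bijective. Twisted bimodularity of the $\Theta_k$ comes from $\pi_\omega(a\,\omega'')=\nu_\omega(a)\pi_\omega(\omega'')$.

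With these signs, the transported divergence is $\nabla(\pi_\omega\cdot\omega')=(-1)^{n-1}\pi_\omega(d\omega')$. The rule \eqref{BrzezinskiSitarz2017(2.4)} then follows from $d(\omega'a)=d\omega'\,a+(-1)^{n-1}\omega'\wedge da$.

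The step you flag as the main obstacle is a short check. For $\omega'\in\Omega^kA$ and $\omega'''\in\Omega^{n-k-1}A$, formula \eqref{BrzezinskiSitarz2017(2.5)} gives
\[
\nabla_{n-k-1}(\pi_\omega\cdot\omega')(\omega''')=(-1)^{n-1}\pi_\omega(d\omega'\wedge\omega''')+\bigl((-1)^{n-1+k}+(-1)^{n-k}\bigr)\pi_\omega(\omega'\wedge d\omega''').
\]
The bracket vanishes, so $\nabla_{n-k-1}\circ\Theta_k=\Theta_{k+1}\circ d$ for all $k$. Flatness then follows from $d\circ d=0$ together with bijectivity of the $\Theta_k$.
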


A volume form $\omega\in \Omega^nA$ is an {\em integrating form} if and only if it satisfies condition $(4)$ of Proposition \ref{integrableequiva} \cite[Remark 2.3]{BrzezinskiSitarz2017}.

The most interesting cases of differential calculi are those where $\Omega^k A$ are finitely generated and projective right or left (or both) $A$-modules (for more details, see \cite{Brzezinski2011}).

\begin{proposition}\label{BrzezinskiSitarz2017Lemmas2.6and2.7}
\begin{enumerate}
\item [\rm (1)] \cite[Lemma 2.6]{BrzezinskiSitarz2017} Consider $(\Omega A, d)$ an integrable and $n$-dimensional calculus over $A$ with integrating form $\omega$. Then $\Omega^{k} A$ is a finitely generated projective right $A$-module if there exist a finite number of forms $\omega_i \in \Omega^{k} A$ and $\overline{\omega}_i \in \Omega^{n-k} A$ such that for all $\omega' \in \Omega^{k} A$, we have that 
\begin{equation*}
\omega' = \sum_{i} \omega_i \pi_{\omega} (\overline{\omega}_i \wedge \omega').
\end{equation*}

\item [\rm (2)] \cite[Lemma 2.7]{BrzezinskiSitarz2017} Let $(\Omega A, d)$ be an $n$-dimensional calculus over $A$ admitting a volume form $\omega$. Assume that for all $k = 1, \ldots, n-1$, there exists a finite number of forms $\omega_{i}^{k},\overline{\omega}_{i}^{k} \in \Omega^{k}(A)$ such that for all $\omega'\in \Omega^kA$, we have that
\begin{equation*}
\omega'=\displaystyle\sum_i\omega_{i}^{k}\pi_\omega(\overline{\omega}_{i}^{n-k}\wedge\omega')=\displaystyle\sum_i\nu_{\omega}^{-1}(\pi_\omega(\omega'\wedge\omega_{i}^{n-k}))\overline{\omega}_{i}^{k},
\end{equation*}

where $\pi_{\omega}$ and $\nu_{\omega}$ are defined by {\rm (}\ref{BrzezinskiSitarz2017(2.1)}{\rm )} and {\rm (}\ref{BrzezinskiSitarz2017(2.2)}{\rm )}, respectively. Then $\omega$ is an integral form and all the $\Omega^{k}A$ are finitely generated and projective as left and right $A$-modules.
\end{enumerate}
\end{proposition}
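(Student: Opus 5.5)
The statement combines two results, and both reduce to dual-basis arguments built from the right module isomorphism $\pi_\omega$ and the automorphism $\nu_\omega$. The identity I will use repeatedly comes from (\ref{BrzezinskiSitarz2017(2.1)}) and (\ref{BrzezinskiSitarz2017(2.2)}). If $\eta = \omega c \in \Omega^n A$ and $a \in A$, then $a\eta = \omega\,\nu_\omega(a)c$. Hence
$$
\pi_\omega(a\eta) = \nu_\omega(a)\,\pi_\omega(\eta), \qquad \pi_\omega(\nu_\omega^{-1}(b)\,\eta) = b\,\pi_\omega(\eta).
$$

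For part (1), I will apply the dual basis lemma for right modules. For each $i$, define $f_i:\Omega^k A \to A$ by $f_i(\omega') = \pi_\omega(\overline{\omega}_i \wedge \omega')$. Then $f_i = \pi_\omega \cdot \overline{\omega}_i$ in the sense of (\ref{BrzezinskiSitarz2017(2.3)}). Each $f_i$ is right $A$-linear, because the wedge product is right $A$-linear in its second argument and $\pi_\omega$ is a right $A$-module map. The hypothesis $\omega' = \sum_i \omega_i f_i(\omega')$ says exactly that $\{(\omega_i, f_i)\}$ is a finite dual basis. Hence $\Omega^k A$ is finitely generated and projective as a right $A$-module.

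For part (2), I will verify condition (4) of Proposition \ref{integrableequiva} by writing down an explicit inverse of $\ell^k_{\pi_\omega}$. The case $k=0$ is immediate from the volume form hypothesis, so take $1 \le k \le n-1$. For $\phi \in \mathcal{I}_{n-k}A$, define
$$
\Psi(\phi) = \sum_i \nu_\omega^{-1}\bigl(\phi(\omega_i^{n-k})\bigr)\,\overline{\omega}_i^{k}.
$$
The two compositions are checked as follows.

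\begin{itemize}
\item \emph{$\ell^k_{\pi_\omega}\circ\Psi = \mathrm{id}$.} For $\omega'' \in \Omega^{n-k}A$, the identity above gives $(\pi_\omega\cdot\Psi(\phi))(\omega'') = \sum_i \phi(\omega_i^{n-k})\,\pi_\omega(\overline{\omega}_i^{k}\wedge\omega'')$. By right linearity of $\phi$, this equals $\phi\bigl(\sum_i \omega_i^{n-k}\pi_\omega(\overline{\omega}_i^{k}\wedge\omega'')\bigr)$. That sum is $\phi(\omega'')$ by the first expansion hypothesis applied in degree $n-k$.
\item \emph{$\Psi\circ\ell^k_{\pi_\omega} = \mathrm{id}$.} We have $\Psi(\pi_\omega\cdot\omega') = \sum_i \nu_\omega^{-1}\bigl(\pi_\omega(\omega'\wedge\omega_i^{n-k})\bigr)\overline{\omega}_i^{k}$. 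This is $\omega'$ by the second expansion in degree $k$.
\end{itemize}

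So every $\ell^k_{\pi_\omega}$ is bijective, and $\omega$ is an integrating form.

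Projectivity follows as in part (1). On the right, use the first expansion with the functionals $\pi_\omega\cdot\overline{\omega}_i$. On the left, use the second expansion with the functionals $g_i(\omega') = \nu_\omega^{-1}\bigl(\pi_\omega(\omega'\wedge\omega_i)\bigr)$. These are left $A$-linear, since $g_i(a\omega') = \nu_\omega^{-1}\bigl(\nu_\omega(a)\,\pi_\omega(\omega'\wedge\omega_i)\bigr) = a\,g_i(\omega')$. The degrees $0$ and $n$ are free of rank one by the volume form assumption.

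The only delicate step is the index bookkeeping in the two compositions. The first expansion must be invoked in degree $n-k$ and the second in degree $k$. Both degrees lie in $\{1,\dots,n-1\}$, so both expansions are available, and the pairing of $\omega_i$ with $\overline{\omega}_i$ is consistent across the two degrees. The twist by $\nu_\omega$ must also be moved through $\pi_\omega$ in the correct direction, which is what the identity above records.
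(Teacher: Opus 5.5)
Your proof is correct and follows essentially the standard argument of Brzezi\'nski and Sitarz; the paper itself only cites their Lemmas 2.6 and 2.7 without reproducing a proof. You use the dual basis lemma with the functionals $\pi_\omega\cdot\overline{\omega}_i$ in part (1), and in part (2) the explicit inverse $\phi\mapsto\sum_i\nu_\omega^{-1}(\phi(\omega_i^{n-k}))\,\overline{\omega}_i^{k}$ of $\ell^k_{\pi_\omega}$, with the two expansions giving right and left projectivity respectively.
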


Brzezi\'nski and Sitarz \cite[p. 421]{BrzezinskiSitarz2017} point out that, in order to relate the dimension of an integrable calculus $(\Omega A,d)$ with the ``size'' of the underlying affine algebra $A$, a suitable notion of dimension is needed: its Gelfand-Kirillov dimension $\text{GKdim}(A)$ introduced by Gelfand and Kirillov \cite{GelfandKirillov1966, GelfandKirillov1966b} (in some sense, this dimensions measures the deviation of the algebra $A$ from finite dimensionality). For more details about this dimension, see the excellent treatment by Krause and Lenagan \cite{KrauseLenagan2000}.

We arrive to the key notion of the paper.

\begin{definition}[{\cite[Definition 2.4]{BrzezinskiSitarz2017}}]\label{BrzezinskiSitarz2017Definition2.4}
An affine algebra $A$ with integer Gelfand-Kirillov dimension $n$ is said to be {\em differentially smooth} if it admits an $n$-dimensional connected integrable differential calculus $(\Omega A, d)$.
\end{definition}

From Definition \ref{BrzezinskiSitarz2017Definition2.4} it follows that a differentially smooth algebra comes equipped with a well-behaved differential structure and with the precise concept of integration \cite[p. 2414]{BrzezinskiLomp2018}.

As we said in the Introduction, several examples of noncommutative algebras have been proved to be differentially smooth (e.g. \cite{Brzezinski2015, BrzezinskiElKaoutitLomp2010, BrzezinskiLomp2018, BrzezinskiSitarz2017, Karacuha2015, KaracuhaLomp2014, ReyesSarmiento2022}). Of course, there are examples of algebras that are not differentially smooth. Consider the commutative algebra $A = \mathbb{C}[x, y] / \langle xy \rangle$. A proof by contradiction shows that for this algebra there are no one-dimensional connected integrable calculi over $A$, so it cannot be differentially smooth \cite[Example 2.5]{BrzezinskiSitarz2017}.

\subsection{3-dimensional skew polynomial algebras}\label{3-dimensionaldefinitioncharacterization}
Bell and Smith \cite{BellSmith1990} (see also \cite[Definition C4.3]{Rosenberg1995}) defined the {\em 3-dimensional skew polynomial rings} as those $\Bbbk$-algebras generated by the indeterminates $x, y, z$ restricted to relations 
$$
yz-\alpha zy=\lambda,\quad zx-\beta xz=\mu \quad {\rm and}\ xy-\gamma yx = \nu,
$$ 

such that
\begin{enumerate}
\item [\rm (i)] $\lambda, \mu, \nu\in \Bbbk+\Bbbk x+\Bbbk y+\Bbbk z$, and $\alpha, \beta, \gamma \in \Bbbk\ \backslash\ \{0\}$, and

\item [\rm (ii)] standard monomials $\left\{x^iy^jz^l\mid i,j,l\ge 0\right\}$ are a $\Bbbk$-basis of the algebra $A$.
\end{enumerate}

The classification of these skew polynomial rings is presented in the following proposition.

\begin{proposition}[{\cite[Theorem C.4.3.1]{Rosenberg1995}}]\label{3-dimensionalClassification}
If $A$ is a 3-dimensional skew polynomial ring, then $A$ is one of the following algebras:
\begin{enumerate}
\item [\rm (1)] if\ $|\{\alpha, \beta, \gamma\}|=3$, then $A$ is given by the relations 
$$
yz - \alpha zy = 0,\ zx - \beta xz = 0\ {\rm and} \ xy - \gamma yx = 0.
$$

\item [\rm (2)] if $|\{\alpha, \beta, \gamma\}|=2$ and $\beta\neq \alpha =\gamma =1$, then $A$ is one of the following algebras:
\begin{enumerate}
\item [\rm (i)] $yz-zy=z,\ zx-\beta xz=y$ and $xy - yx = x$ {\rm (}if $\beta = -1$, then we get the Dispin algebra{\rm )}.

\item [\rm (ii)] $yz-zy=z,\ zx-\beta xz=b$ and $xy-yx=x${\rm ;}

\item [\rm (iii)] $yz-zy=0,\ zx-\beta xz=y$ and $xy - yx = 0${\rm ;}

\item [\rm (iv)] $yz-zy=0,\ zx-\beta xz=b$ and $xy - yx = 0${\rm ;}

\item [\rm (v)] $yz-zy=az,\ zx-\beta xz = 0$ and $xy - yx = x${\rm ;}

\item [\rm (vi)] $yz-zy=z,\ zx-\beta xz=0$ and $xy - yx = 0$,
\end{enumerate}

where $a, b$ are any elements of\ $\Bbbk$. All non-zero values of $b$
give isomorphic algebras.

\item [\rm (3)] If $|\{\alpha, \beta, \gamma\}|=2$ and $\beta\neq \alpha=\gamma\neq 1$, then $A$ is one of the following algebras:
\begin{enumerate}
\item [\rm (i)] $yz-\alpha zy=0,\ zx-\beta xz = y+b$ and $xy - \alpha yx=0${\rm ;}

\item [\rm (ii)] $yz-\alpha zy=0,\ zx - \beta xz=b$ and $xy - \alpha yx = 0$.
\end{enumerate}

In this case, $b$ is an arbitrary element of $\Bbbk$. Again, any
non-zero values of $b$ give isomorphic algebras.

\item [\rm (4)] If $\alpha=\beta=\gamma\neq 1$, then $A$ is the algebra defined by the relations
\begin{align*}
yz - \alpha zy = &\ a_1x+b_1, \\
zx - \alpha xz = &\ a_2y+b_2, \ {\rm and} \\
xy - \alpha yx = &\ a_3z+b_3.
\end{align*}

If $a_i = 0$ for all $i$, then all non-zero values of $b_i$ give isomorphic
algebras.

\item [\rm (5)] If $\alpha=\beta=\gamma=1$, then $A$ is isomorphic to one of the following algebras:
\begin{enumerate}
\item [\rm (i)] $yz-zy=x,\ zx-xz=y$ and $xy - yx = z${\rm ;}

\item [\rm (ii)] $yz-zy=0,\ zx-xz=0$ and $xy - yx = z${\rm ;}

\item [\rm (iii)] $yz-zy=0,\ zx-xz=0$ and $xy - yx = b${\rm ;}

\item [\rm (iv)] $yz-zy=-y,\ zx-xz=x+y$ and $xy-yx=0${\rm ;}

\item [\rm (v)] $yz-zy=az,\ zx-xz=z$ and $xy - yx = 0${\rm ;}
\end{enumerate}

Parameters $a,b\in \Bbbk$ are arbitrary,  and all non-zero values of
$b$ generate isomorphic algebras.
\end{enumerate}
\end{proposition}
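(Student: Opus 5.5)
This statement is a classification quoted from Rosenberg, so the plan is to re-derive it from the Bell--Smith definition by combining two tools: the PBW condition (ii), and the freedom to apply linear changes of variables within the span of $x,y,z$ and the constants. These changes include translations $x\mapsto x+c$ and rescalings $x\mapsto sx$.

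\textbf{Step 1 (PBW constraints).} Write $\lambda,\mu,\nu$ as affine linear combinations of $1,x,y,z$. Condition (ii) means that the only overlap ambiguity $(xy)z = x(yz)$ must resolve, in the sense of Bergman's diamond lemma for the ordering $x<y<z$. This reduces to the Jacobi-type identity
\[
[x,\lambda]_{\ast}+[y,\mu]_{\ast}+[z,\nu]_{\ast}=0,
\]
where $[\cdot,\cdot]_\ast$ denotes the appropriately $\alpha,\beta,\gamma$-twisted commutators. Expanding this in the basis $x^iy^jz^l$ gives a finite system of polynomial equations in $\alpha,\beta,\gamma$ and the coefficients of $\lambda,\mu,\nu$.

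\textbf{Step 2 (splitting by the parameters).} I would split according to $|\{\alpha,\beta,\gamma\}|$, as the statement does.
\begin{itemize}
\item When the three parameters are pairwise distinct, the twisting forces every coefficient to vanish, possibly after translating the generators. This gives case (1).
\item When exactly two coincide, I would first use the cyclic symmetry $x\to y\to z\to x$ to arrange $\alpha=\gamma\ne\beta$. The subcases $\alpha=1$ and $\alpha\ne1$ then behave differently: for $\alpha=1$ the commutators with $y$ act like derivations, which permits the Lie-type solutions (i)--(vi) of case (2), while for $\alpha\neq1$ only $\mu\in\Bbbk y+\Bbbk$ survives, giving case (3).
\item When $\alpha=\beta=\gamma\ne1$, the system forces each relation to involve only the ``opposite'' generator, giving case (4).
\item When $\alpha=\beta=\gamma=1$, the algebra is an enveloping algebra of a 3-dimensional Lie algebra, possibly with a central extension. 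Invoking the Bianchi classification of 3-dimensional Lie algebras then yields (i)--(v) of case (5).
\end{itemize}

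\textbf{Step 3 (normalisation of constants).} Show that all nonzero $b$ give isomorphic algebras by rescaling a single generator, for example $z\mapsto bz$ in case (2)(iv).

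\textbf{Main obstacle.} The main difficulty will be Step 2 in the cases $|\{\alpha,\beta,\gamma\}|=2$ with $\alpha=1$, and $\alpha=\beta=\gamma=1$. There the solution space of the Jacobi constraints is large, and the orbits under affine changes of variables must be sorted carefully without losing any family. For these cases I would rely on the Lie-algebra classification, supplemented by a case check of the scalar extension parts.
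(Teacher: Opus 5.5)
\textbf{Verdict: genuine gap.} The paper gives no proof of this proposition; it quotes Rosenberg. So your plan has to stand on its own.

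\textbf{Cases (1)--(4) go through.} The single overlap ambiguity gives exactly the twisted Jacobi identity, whose coefficients are the ten conditions of Remark~\ref{PBWbasis3D}. Cyclic relabelling is harmless, because the PBW property does not depend on the order of the generators (count dimensions in the standard filtration). Translations then remove everything that is not listed. For example, in case (2) the quadratic conditions give $a_\lambda=b_\lambda=b_\nu=c_\nu=0$. Once $a_\mu,c_\mu$ are killed by translating $z$ and $x$, the remaining conditions reduce to $d_\lambda=d_\nu=0$ and $b_\mu(c_\lambda-a_\nu)=d_\mu(c_\lambda-a_\nu)=0$, whose solutions are precisely (i)--(vi).

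\textbf{The step that fails is the last bullet of Step 2.} The Lie-algebra classification does \emph{not} collapse to (5)(i)--(v), because that list is incomplete. So case (5) cannot be proved as stated. Two algebras from the paper's own Example list are counterexamples:
\begin{enumerate}
\item[(a)] Smith's algebra (ix) with $xy-yx=0$ and $h$ renamed $z$: $yz-zy=y$, $zx-xz=x$, $xy=yx$. This is $U(\mathfrak g)$ with $\mathrm{ad}\,z=\mathrm{diag}(1,-1)$ on $\Bbbk x\oplus\Bbbk y$.
\item[(b)] Bavula's algebra (v): $xy-yx=y$, $xz-zx=1$, $yz=zy$.
\end{enumerate}
For $\alpha=\beta=\gamma=1$ the six quadratic PBW conditions are vacuous, and the other four vanish term by term for both algebras. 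So both are 3-dimensional skew polynomial rings of type (5).

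In characteristic $0$, compare two isomorphism invariants: the largest commutative quotient $A/\langle [A,A]\rangle$ and the centre.
\begin{itemize}
\item For (a) the quotient is $\Bbbk[z]$, and the centre contains the non-scalar element $xy$.
\item For (b) the quotient is $0$ (since $xz-zx=1$), and the centre is $\Bbbk$.
\item On the list, only (5)(iv) has a one-variable polynomial quotient, and its centre is $\Bbbk$.
\item Only (5)(iii) with $b\neq 0$ has quotient $0$, and its centre is $\Bbbk[z]$.
\item The remaining items have quotients $\Bbbk$, $\Bbbk[x,y]$ or $\Bbbk[x,y,z]$.
\end{itemize}
Hence neither (a) nor (b) is on the list.

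\textbf{Where the list goes wrong.} Your own method exposes the problem.
\begin{itemize}
\item With the paper's corrected (5)(v), the element $y+ax$ is central. So (v) is $U(\mathfrak r_2\times\Bbbk)$ for every $a$, and the whole family $[x,y]=0$, $\mathrm{ad}\,z=\mathrm{diag}(1,\lambda)$ with $\lambda\neq 0$ is lost. Reading (v) as $yz-zy=ay$, $zx-xz=x$ would restore this family.
\item The nontrivial class in the one-dimensional space $H^2(\mathfrak r_2\times\Bbbk,\Bbbk)$ is exactly (b), and it is missing under any reading.
\item The Bianchi list classifies over $\mathbb R$. There, for instance, $U(\mathfrak{sl}(2,\mathbb R))\not\cong U(\mathfrak{so}(3))$.
\item In case (4), rescaling normalises the nonzero $b_i$ only up to squares.
\end{itemize}

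\textbf{What a proof would need.} Before your plan can become a proof, you must correct the statement and assume $\Bbbk$ is algebraically closed of characteristic $0$. Case (5) then becomes two tasks. First, classify pairs $(\mathfrak g,[\omega])$ with $[\omega]\in H^2(\mathfrak g,\Bbbk)$ modulo $\mathrm{Aut}(\mathfrak g)$. Second, identify the non-filtered isomorphisms among them. For instance, the nonzero twists of the abelian algebra, of the Heisenberg algebra and of (a) are all isomorphic to $A_1\otimes\Bbbk[t]$, which is (5)(iii).
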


\begin{remark}\label{Rosenberg}
As wee saw above, Rosenberg \cite[Theorem C4.3.1, case (5)(v), p. 101]{Rosenberg1995} presented the relations given by
\[
yz-zy=az,\ zx-xz=x \quad {\rm and} \quad xy-yx=0.
\]

However, there is a typo in the second relation as one can check using Bergman's diamond lemma \cite{Bergman1978} to guarantee the PBW basis of the algebra (see Remark \ref{PBWbasis3D} for more details). As a matter of fact, he presented this algebra as the Ore extension \cite{Ore1933} given by
\[
A = \Bbbk[x,y][z;\varphi],
\]

where $\varphi\in\mathrm{Aut}(\Bbbk[x,y])$ is given by $\varphi(x)=x+1$ and $\varphi(y)=y-a$ \cite[p. 108]{Rosenberg1995}. By the defining rule $za=\varphi(a)z$ for all $a\in \Bbbk[x,y]$, we obtain that
\begin{align*}
zx=\varphi(x)z=(x+1)z &\quad {\rm whence} \quad  zx-xz=z, \\
zy=\varphi(y)z=(y-a)z &\quad {\rm i.e.} \quad yz-zy=az.
\end{align*}

Therefore, the correct defining relations of case (5)(v) are given by 
\[
yz - zy = az,\ zx - xz = z,\quad {\rm and}\quad xy - yx = 0.
\]
\end{remark}

Let us see some remarkable examples of 3-dimensional skew polynomial rings appearing in Bavula \cite{Bavula2023}.

\begin{example}
\begin{enumerate}
\item [\rm (i)] The commutative polynomial ring $\Bbbk[x, y, z]$ in three indeterminates.

\item [\rm (ii)] The universal enveloping algebra $U(\mathfrak{sl}(2))$ of the Lie algebra $\mathfrak{sl}(2)$.

\item [\rm (iii)] The quotient algebra $U(\mathcal{N}) / \langle c - 1\rangle$ given by the relations
\begin{align*}
xy - yx = &\ z, \\
xz - zx = &\ 0 \quad {\rm and} \\
yz - zy = &\ 1.
\end{align*}

As it can be seen, $U(\mathcal{N}) / \langle c - 1\rangle$ is a tensor product $A_1 \otimes_{\Bbbk} \, \Bbbk[x']$ of its subalgebras: the Weyl algebra $A_1 = \Bbbk \{y, z\} / \langle yz - zy - 1\rangle$ and the polynomial algebra $\Bbbk[x']$ where $x' = x + \frac{1}{2}z^2$.

\item [\rm (iv)] 
$$
U(\mathfrak{n}_2 \times \Bbbk z) \cong \Bbbk \{x, y, z\} / \langle xy - yx - y\rangle, 
$$ 

where $z$ is a central element.

\item [\rm (v)] 
$$
U(\mathcal{M})/\langle c-1\rangle \cong \Bbbk\{x,y,z\} / \langle xy - yx = y, \; xz - zx = 1,\; yz - zy = 0 \rangle.
$$ 

Note that $U(\mathcal{M})/\langle c-1\rangle$ is a skew polynomial algebra $A_1[y;\sigma]$ where
$$
A_1 = \Bbbk\{x, z\} / \langle xz - zx - 1\rangle
$$

is the Weyl algebra and $\sigma$ is an automorphism of $A_1$ given by the rule $\sigma(x) = x + 1$ and $\sigma(z) = z$.

\item [\rm (vi)] Zhedanov \cite[Section 1]{Zhedanov1991} introduced the {\em Askey-Wilson algebras} $AW(3)$ as the algebra generated by three operators $K_0, K_1$, and $K_2$, that satisfy the commutation relations 
\begin{align*}
[K_0, K_1]_{\omega} = &\, K_2, \\    
[K_2, K_0]_{\omega} = &\, BK_0 + C_1K_1 + D_1 \quad {\rm and} \\
[K_1, K_2]_{\omega} = &\ BK_1 + C_0K_0 + D_0
\end{align*}

where $B, C_0, C_1, D_0$, and $D_1$ are the structure constants of the algebra, which Zhedanov assumes are real, and the $q$-commutator $[ - , -]_{\omega}$ is given by 
$$
[\square, \triangle]_{\omega}:= e^{\omega}\square \triangle - e^{- \omega}\triangle \square, \quad {\rm where} \, \, \omega\in \mathbb{R}^{\times}.
$$ 

Notice that in the limit $\omega \to 0$, the algebra AW(3) becomes an ordinary Lie algebra with three generators ($D_0$ and $D_1$ are included among the structure constants of the algebra in order to take into account algebras of Heisenberg-Weyl type). The relations defining the algebra can be written as 
\begin{align*}
    e^{\omega}K_0K_1 - e^{-\omega}K_1K_0 = &\ K_2,\\
    e^{\omega} K_2K_0 - e^{-\omega}K_0 K_2 = &\ BK_0 + C_1K_1 + D_1 \quad {\rm and} \\
    e^{\omega}K_1K_2 - e^{-\omega}K_2K_1 = &\ BK_1 + C_0K_0 + D_0.
\end{align*}

\item [\rm (vii)] The {\em Dispin algebra} $U({\mathfrak{osp}}(1,2))$ is generated by the indeterminates $x, y, z$ over the field $\Bbbk$ subject to the relations
\begin{align*}
yz - zy = &\, z, \\
zx + xz = &\, y \quad {\rm and} \\
xy - yx = &\, x. 
\end{align*}

\item [\rm (viii)] The {\em Woronowicz algebra} $W_v(\mathfrak{sl}(2,\Bbbk))$ was introduced by Woronowicz \cite{Woronowicz1987} and it is generated by $x, y, z$ with defining relations given by
\begin{align*}
xz - \nu^{4}zx = &\, (1 + \nu^2)x,  \\
xy - \nu^{2}yx = &\, \nu z \quad {\rm and} \\
zy - \nu^{4}yz = &\, (1 + \nu^{2})y
\end{align*}

\item [\rm (ix)] Smith \cite{Smith1990} studied similar and different algebras to the universal enveloping algebra from $U(\mathfrak{sl}(2, \mathbb{C}))$, the universal enveloping algebra of $\mathfrak{sl}(2, \mathbb{C})$. Fix $f(h)\in \mathbb{C}[h]$ and define the $\mathbb{C}$-algebra $A$ subject to the relations
\begin{align*}
hx - xh = &\ x, \\
hy - yh = &\ -y \quad {\rm and} \\
xy - yx = &\ f(H).
\end{align*}

If ${\rm deg}(f) \le 1$, then $A$ is a factor ring of an enveloping algebra. In particular, if $xy - yx = \alpha h + \beta$ with $\alpha, \beta \in \mathbb{C}$, then we have the following possibilities:
\begin{itemize}
    \item if $\alpha = 0$ and $\beta \neq 0$, then $A \cong \mathbb{C}[t, \partial] \otimes_{\mathbb{C}} \mathbb{C}[s]$, where $t$ and $s$ are commuting indeterminates and $\partial = d/dt$.

    \item If $\alpha = 0$ and $\beta = 0$, then $A\cong U(\mathfrak{h})$ where $\mathfrak{h}$ is a 3-dimensional solvable Lie algebra.

    \item If $\alpha \neq 0$, then $A\cong U(\mathfrak{sl}(2))$.
\end{itemize}

\item [\rm (x)] Following Havli\v{c}ek et al. \cite[p. 79]{HavlicekKlimykPosta2000} (see also \cite{Fairlie1990, Odesskii1986}), the $ \mathbb{C}$-algebra $U_q'(\mathfrak{so}_3)$ is generated by the indeterminates $I_1, I_2$, and $I_3$, subject to the relations given by
\begin{align*}
I_2I_1 - qI_1I_2 = &\, -q^{\frac{1}{2}}I_3, \\ 
I_3I_1 - q^{-1}I_1I_3 = &\, q^{-\frac{1}{2}}I_2 \quad {\rm and} \\ 
I_3I_2 - qI_2I_3 = &\, -q^{\frac{1}{2}}I_1,
\end{align*}

where $q$ is a non-zero element of $\mathbb{C}$. It is straightforward to show that $U_q'(\mathfrak{so}_3)$ cannot be expressed as an iterated Ore extension of $\mathbb{C}$.  
\end{enumerate}
\end{example}

\begin{remark}\label{PBWbasis3D}
From Section \ref{3-dimensionaldefinitioncharacterization}, consider the notation given by 
\begin{align}
yz-\alpha zy & = a_{\lambda}x+b_{\lambda}y+c_{\lambda}z+d_{\lambda}, \notag \\
zx-\beta xz & = a_{\mu}x+b_{\mu}y+c_{\mu}z+d_{\mu}  \quad {\rm and}\ \label{rel3skew} \\
xy-\gamma yx & = a_{\nu}x+b_{\nu}y+c_{\nu}z+d_{\nu}, \notag
\end{align} 

with the elements $a$'s, $b$'s, $c$'s, $d$'s belonging to $\Bbbk$ and $\alpha, \beta, \gamma \in \Bbbk\setminus\{0\}$.

Fix $x\prec y\prec z$. Using Bergman's diamond lemma, necessary and sufficient conditions are established to guarantee that the standard monomials $\left\{x^iy^jz^l\mid i,j,l\ge 0\right\}$ are a $\Bbbk$-basis of 3-dimensional skew polynomial rings. Taking into account the notation above, these relations are obtained as follows (c.f. \cite[Section 5]{ReyesSuarez20173D}).

Since $\alpha,\beta,\gamma\neq 0$, the defining relations yield the rewriting rules
\begin{align*}
f_{21} = &\ \gamma^{-1}xy-\gamma^{-1}a_{\nu}x-\gamma^{-1}b_{\nu}y-\gamma^{-1}c_{\nu}z-\gamma^{-1}d_{\nu},\\
f_{32} = &\ \alpha^{-1}yz-\alpha^{-1}a_{\lambda}x-\alpha^{-1}b_{\lambda}y-\alpha^{-1}c_{\lambda}z-\alpha^{-1}d_{\lambda},\\
f_{31} = &\ \beta xz+a_{\mu}x+b_{\mu}y+c_{\mu}z+d_{\mu},
\end{align*}

it is enough to check that both reductions of $zyx$ coincide, namely
\begin{align}\label{Diamond3DBergman}
   z f_{21}=f_{32}x. 
\end{align}

Note that
\begin{align*}
z f_{21}
&=z\bigl(\gamma^{-1}xy-\gamma^{-1}a_{\nu}x-\gamma^{-1}b_{\nu}y-\gamma^{-1}c_{\nu}z-\gamma^{-1}d_{\nu}\bigr)\\
&=\gamma^{-1}zxy-\gamma^{-1}a_{\nu}zx-\gamma^{-1}b_{\nu}zy-\gamma^{-1}c_{\nu}z^2-\gamma^{-1}d_{\nu}z.
\end{align*}

Using the rules for $zx$ and $zy$, we obtain the equalities 
\begin{align*}
zxy
& = (zx)y\\
& = (\beta xz+a_{\mu}x+b_{\mu}y+c_{\mu}z+d_{\mu})y\\
& = \beta xzy+a_{\mu}xy+b_{\mu}y^2+c_{\mu}zy+d_{\mu}y\\
& = \beta x\bigl(\alpha^{-1}yz-\alpha^{-1}a_{\lambda}x-\alpha^{-1}b_{\lambda}y-\alpha^{-1}c_{\lambda}z-\alpha^{-1}d_{\lambda}\bigr)\\
&  \quad + a_{\mu}xy+b_{\mu}y^2+c_{\mu}\bigl(\alpha^{-1}yz-\alpha^{-1}a_{\lambda}x-\alpha^{-1}b_{\lambda}y-\alpha^{-1}c_{\lambda}z-\alpha^{-1}d_{\lambda}\bigr)+d_{\mu}y\\
& = \frac{\beta}{\alpha}xyz-\frac{\beta a_{\lambda}}{\alpha}x^2-\frac{\beta b_{\lambda}}{\alpha}xy-\frac{\beta c_{\lambda}}{\alpha}xz-\frac{\beta d_{\lambda}}{\alpha}x\\
&  \quad + a_{\mu}xy+b_{\mu}y^2+\frac{c_{\mu}}{\alpha}yz-\frac{a_{\lambda}c_{\mu}}{\alpha}x-\frac{b_{\lambda}c_{\mu}}{\alpha}y-\frac{c_{\lambda}c_{\mu}}{\alpha}z-\frac{c_{\mu}d_{\lambda}}{\alpha}+d_{\mu}y.
\end{align*}

Also,
\begin{align*}
-a_{\nu}zx
&=-a_{\nu}\bigl(\beta xz+a_{\mu}x+b_{\mu}y+c_{\mu}z+d_{\mu}\bigr)\\
&=-\beta a_{\nu}xz-a_{\mu}a_{\nu}x-a_{\nu}b_{\mu}y-a_{\nu}c_{\mu}z-a_{\nu}d_{\mu},
\end{align*}

and
\begin{align*}
-b_{\nu}zy
&=-b_{\nu}\bigl(\alpha^{-1}yz-\alpha^{-1}a_{\lambda}x-\alpha^{-1}b_{\lambda}y-\alpha^{-1}c_{\lambda}z-\alpha^{-1}d_{\lambda}\bigr)\\
&=-\frac{b_{\nu}}{\alpha}yz+\frac{a_{\lambda}b_{\nu}}{\alpha}x+\frac{b_{\lambda}b_{\nu}}{\alpha}y+\frac{b_{\nu}c_{\lambda}}{\alpha}z+\frac{b_{\nu}d_{\lambda}}{\alpha}.
\end{align*}

In this way, 
\begin{align*}
z f_{21}
& = \frac{\beta}{\alpha\gamma}xyz
-\frac{\beta a_{\lambda}}{\alpha\gamma}x^2
+\frac{\alpha a_{\mu}-\beta b_{\lambda}}{\alpha\gamma}xy
-\frac{\beta(\alpha a_{\nu}+c_{\lambda})}{\alpha\gamma}xz
+\frac{b_{\mu}}{\gamma}y^2 + \frac{-b_{\nu}+c_{\mu}}{\alpha\gamma}yz \\
& \quad - \frac{c_{\nu}}{\gamma}z^2 + \frac{a_{\lambda}b_{\nu}-a_{\lambda}c_{\mu}-\alpha a_{\mu}a_{\nu}-\beta d_{\lambda}}{\alpha\gamma}x + \frac{-\alpha a_{\nu}b_{\mu}+\alpha d_{\mu}+b_{\lambda}b_{\nu}-b_{\lambda}c_{\mu}}{\alpha\gamma}y\\
& \quad + \frac{-\alpha a_{\nu}c_{\mu}-\alpha d_{\nu}+b_{\nu}c_{\lambda}-c_{\lambda}c_{\mu}}{\alpha\gamma}z + \frac{-\alpha a_{\nu}d_{\mu}+b_{\nu}d_{\lambda}-c_{\mu}d_{\lambda}}{\alpha\gamma}.
\end{align*}

Now,
\begin{align*}
f_{32}x
&=\bigl(\alpha^{-1}yz-\alpha^{-1}a_{\lambda}x-\alpha^{-1}b_{\lambda}y-\alpha^{-1}c_{\lambda}z-\alpha^{-1}d_{\lambda}\bigr)x\\
&=\alpha^{-1}yzx-\alpha^{-1}a_{\lambda}x^2-\alpha^{-1}b_{\lambda}yx-\alpha^{-1}c_{\lambda}zx-\alpha^{-1}d_{\lambda}x.
\end{align*}

We first reduce $yzx$:
\begin{align*}
yzx
&=y(zx)\\
&=y(\beta xz+a_{\mu}x+b_{\mu}y+c_{\mu}z+d_{\mu})\\
&=\beta yxz+a_{\mu}yx+b_{\mu}y^2+c_{\mu}yz+d_{\mu}y.
\end{align*}

Using the rule for $yx$, we get that
\begin{align*}
yxz
&=\bigl(\gamma^{-1}xy-\gamma^{-1}a_{\nu}x-\gamma^{-1}b_{\nu}y-\gamma^{-1}c_{\nu}z-\gamma^{-1}d_{\nu}\bigr)z\\
&=\gamma^{-1}xyz-\gamma^{-1}a_{\nu}xz-\gamma^{-1}b_{\nu}yz-\gamma^{-1}c_{\nu}z^2-\gamma^{-1}d_{\nu}z,
\end{align*}

and similarly
\begin{align*}
a_{\mu}yx
&=a_{\mu}\bigl(\gamma^{-1}xy-\gamma^{-1}a_{\nu}x-\gamma^{-1}b_{\nu}y-\gamma^{-1}c_{\nu}z-\gamma^{-1}d_{\nu}\bigr).
\end{align*}

Hence
\begin{align*}
yzx
&= \frac{\beta}{\gamma}xyz
+\frac{a_{\mu}}{\gamma}xy
-\frac{\beta a_{\nu}}{\gamma}xz
+b_{\mu}y^2
+\left(c_{\mu}-\frac{\beta b_{\nu}}{\gamma}\right)yz
-\frac{\beta c_{\nu}}{\gamma}z^2\\
&
-\frac{a_{\mu}a_{\nu}}{\gamma}x
+\left(d_{\mu}-\frac{a_{\mu}b_{\nu}}{\gamma}\right)y
-\left(\frac{a_{\mu}c_{\nu}}{\gamma}+\frac{\beta d_{\nu}}{\gamma}\right)z
-\frac{a_{\mu}d_{\nu}}{\gamma}.
\end{align*}

Moreover,
\begin{align*}
-b_{\lambda}yx
&=-b_{\lambda}\bigl(\gamma^{-1}xy-\gamma^{-1}a_{\nu}x-\gamma^{-1}b_{\nu}y-\gamma^{-1}c_{\nu}z-\gamma^{-1}d_{\nu}\bigr)\\
&=-\frac{b_{\lambda}}{\gamma}xy+\frac{a_{\nu}b_{\lambda}}{\gamma}x+\frac{b_{\lambda}b_{\nu}}{\gamma}y+\frac{b_{\lambda}c_{\nu}}{\gamma}z+\frac{b_{\lambda}d_{\nu}}{\gamma},
\end{align*}

and
\begin{align*}
-c_{\lambda}zx
&=-c_{\lambda}\bigl(\beta xz+a_{\mu}x+b_{\mu}y+c_{\mu}z+d_{\mu}\bigr)\\
&=-\beta c_{\lambda}xz-a_{\mu}c_{\lambda}x-b_{\mu}c_{\lambda}y-c_{\lambda}c_{\mu}z-c_{\lambda}d_{\mu}.
\end{align*}

Therefore,
\begin{align*}
f_{32}x
& = \frac{\beta}{\alpha\gamma}xyz
-\frac{a_{\lambda}}{\alpha}x^2
+\frac{a_{\mu}-b_{\lambda}}{\alpha\gamma}xy
-\frac{\beta(a_{\nu}+\gamma c_{\lambda})}{\alpha\gamma}xz
+\frac{b_{\mu}}{\alpha}y^2 + \frac{\gamma c_{\mu}-\beta b_{\nu}}{\alpha\gamma}yz \\
& \quad - \frac{\beta c_{\nu}}{\alpha\gamma}z^2 + \frac{-\gamma a_{\mu}c_{\lambda}-a_{\mu}a_{\nu}+a_{\nu}b_{\lambda}-\gamma d_{\lambda}}{\alpha\gamma}x + \frac{-a_{\mu}b_{\nu}+b_{\lambda}b_{\nu}-\gamma b_{\mu}c_{\lambda}+\gamma d_{\mu}}{\alpha\gamma}y\\
& 
+\frac{-a_{\mu}c_{\nu}+b_{\lambda}c_{\nu}-\beta d_{\nu}-\gamma c_{\lambda}c_{\mu}}{\alpha\gamma}z + \frac{-a_{\mu}d_{\nu}+b_{\lambda}d_{\nu}-\gamma c_{\lambda}d_{\mu}}{\alpha\gamma}.
\end{align*}

Subtracting the two reduced expressions, we obtain the following:
{\small{
\begin{align*}
(zf_{21}-f_{32}x)
=&\frac{a_{\lambda}(\gamma-\beta)}{\alpha\gamma}x^2
+\frac{a_{\mu}(\alpha-1)+b_{\lambda}(1-\beta)}{\alpha\gamma}xy
+\frac{b_{\mu}(\alpha-\gamma)}{\alpha\gamma}y^2\\
&+\frac{\beta\bigl(a_{\nu}(1-\alpha)+c_{\lambda}(\gamma-1)\bigr)}{\alpha\gamma}xz
+\frac{b_{\nu}(\beta-1)+c_{\mu}(1-\gamma)}{\alpha\gamma}yz
+\frac{c_{\nu}(\beta-\alpha)}{\alpha\gamma}z^2\\
&+\frac{a_{\lambda}b_{\nu}-a_{\lambda}c_{\mu}+a_{\mu}a_{\nu}(1-\alpha)+\gamma a_{\mu}c_{\lambda}-a_{\nu}b_{\lambda}+(\gamma-\beta)d_{\lambda}}{\alpha\gamma}x\\
&+\frac{a_{\mu}b_{\nu}-\alpha a_{\nu}b_{\mu}-b_{\lambda}c_{\mu}+\gamma b_{\mu}c_{\lambda}+(\alpha-\gamma)d_{\mu}}{\alpha\gamma}y\\
&+\frac{a_{\mu}c_{\nu}-\alpha a_{\nu}c_{\mu}-b_{\lambda}c_{\nu}+b_{\nu}c_{\lambda}+(\gamma-1)c_{\lambda}c_{\mu}+(\beta-\alpha)d_{\nu}}{\alpha\gamma}z\\
&+\frac{a_{\mu}d_{\nu}-\alpha a_{\nu}d_{\mu}-b_{\lambda}d_{\nu}+b_{\nu}d_{\lambda}+\gamma c_{\lambda}d_{\mu}-c_{\mu}d_{\lambda}}{\alpha\gamma}.
\end{align*}
}}

Thus, the expression (\ref{Diamond3DBergman}) holds if and only if each coefficient above is zero, that is:
\begin{align}
a_{\lambda}(\gamma-\beta)&=0, \\
a_{\mu}(\alpha-1)+b_{\lambda}(1-\beta)&=0, \\
b_{\mu}(\alpha-\gamma)&=0, \\
a_{\nu}(1-\alpha)+c_{\lambda}(\gamma-1)&=0, \\
b_{\nu}(\beta-1)+c_{\mu}(1-\gamma)&=0,\\
c_{\nu}(\beta-\alpha)&=0, \\
a_{\lambda}b_{\nu}-a_{\lambda}c_{\mu}+a_{\mu}a_{\nu}(1-\alpha)+\gamma a_{\mu}c_{\lambda}-a_{\nu}b_{\lambda}+(\gamma-\beta)d_{\lambda}&=0, \\
a_{\mu}b_{\nu}-\alpha a_{\nu}b_{\mu}-b_{\lambda}c_{\mu}+\gamma b_{\mu}c_{\lambda}+(\alpha-\gamma)d_{\mu}&=0, \\
a_{\mu}c_{\nu}-\alpha a_{\nu}c_{\mu}-b_{\lambda}c_{\nu}+b_{\nu}c_{\lambda}+(\gamma-1)c_{\lambda}c_{\mu}+(\beta-\alpha)d_{\nu}&=0, \\
a_{\mu}d_{\nu}-\alpha a_{\nu}d_{\mu}-b_{\lambda}d_{\nu}+b_{\nu}d_{\lambda}+\gamma c_{\lambda}d_{\mu}-c_{\mu}d_{\lambda}&=0. 
\end{align}
\end{remark}

As we will see in the next section, some of these relations are required to assert the differential smoothness of the algebras of our interest.

\section{Differential and integral calculus}\label{Differentialandintegralcalculusbi-quadraticalgebras}

\begin{theorem}\label{Firsttheoremsmoothness3-dimensionalalgebras}
Let $A$ be a $3$-dimensional skew polynomial ring with PBW basis. If the conditions
\begin{align*}
    a_{\lambda} = b_{\mu} = c_{\nu} = &\ 0, \\
    c_{\mu}(\beta-1) = &\ 0,  \\
    a_{\mu}c_{\mu} = &\ 0,  \\
    b_{\nu}(\beta-1) = &\ 0,  \\
    d_{\nu}(\gamma-\beta^{-1}) = &\ a_{\nu}b_{\nu},  \\
    d_{\lambda}(\alpha-1) = &\ c_{\lambda}b_{\lambda},  \\
    d_{\nu}(\gamma-1) = &\ a_{\nu}b_{\nu}, \\
    b_{\lambda}(\beta-1) = &\ 0,  \\
    d_{\lambda}(\alpha\beta-1) = &\ b_{\lambda}c_{\lambda} \quad {\rm and}  \\
    a_{\mu}(\beta-1) = &\ 0, 
\end{align*}

hold, then $A$ is differentially smooth.
\end{theorem}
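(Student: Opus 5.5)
Following the strategy that Brzezi\'nski and Sitarz use for quantum polynomial algebras and Ore extensions, I will build an explicit $3$-dimensional calculus $(\Omega A, d)$. It will be generated by three one-forms $dx, dy, dz$ subject to \emph{diagonal} commutation rules with the generators. I will use $\operatorname{GKdim}(A)=3$, which holds because of the PBW basis $\{x^iy^jz^l\}$ and the filtered deformation of a quantum affine space.

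\textbf{Construction of the calculus.} I first choose algebra automorphisms $\nu_x,\nu_y,\nu_z$ of $A$ given by rescalings, say $\nu_x(x)=x$, $\nu_x(y)=\gamma^{-1}y$, $\nu_x(z)=\beta z$, and analogously for $\nu_y$ and $\nu_z$, possibly with translations when the relations are of Lie type. The requirement is that each of these maps preserves the three defining relations. Checking this requirement is exactly where the hypotheses enter: $a_\lambda=b_\mu=c_\nu=0$, the conditions $b_\nu(\beta-1)=c_\mu(\beta-1)=a_\mu(\beta-1)=b_\lambda(\beta-1)=0$, $a_\mu c_\mu=0$, and the constraints on $d_\nu$ and $d_\lambda$ are precisely what make the rescaled relations consistent. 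I then impose the bimodule relations
\[
dx\, a=\nu_x(a)\,dx,\quad dy\, a=\nu_y(a)\,dy,\quad dz\, a=\nu_z(a)\,dz ,
\]
and I need to verify that $d$, extended by the Leibniz rule, respects the three defining relations of $A$. Applying $d$ to $yz-\alpha zy-\lambda$ and collecting the coefficients of $dy$ and $dz$ produces scalar identities. These should reduce to the listed conditions together with the Bergman relations of Remark~\ref{PBWbasis3D}. This yields $\Omega^1A$ as a free left and right module on $dx,dy,dz$.

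\textbf{Higher forms.} I extend to $\Omega A$ by the skew relations $dx\wedge dx=dy\wedge dy=dz\wedge dz=0$ and $dy\wedge dz=-\alpha'\,dz\wedge dy$ and so on, with constants chosen so that $d^2=0$ is consistent with the relations above, for instance $dy\wedge dz=-\nu_y(z)z^{-1}$-type scalars. Then $\Omega^2A$ is free of rank $3$ and $\Omega^3A$ is free of rank one on $\omega=dx\wedge dy\wedge dz$. Since $a\,\omega=\omega\,\nu_\omega(a)$ with $\nu_\omega=\nu_z\nu_y\nu_x$ (up to order), an automorphism, $\omega$ is a volume form. Connectedness follows because $d(x^iy^jz^l)$ has leading term in a PBW-type basis of $\Omega^1A$ with nonzero coefficients (here $\operatorname{char}\Bbbk=0$ or suitable genericity is used), so $\ker d=\Bbbk$.

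\textbf{Integrability.} I will invoke Proposition~\ref{BrzezinskiSitarz2017Lemmas2.6and2.7}(2) with dual bases. For $k=1$ I take $\omega^1_i\in\{dx,dy,dz\}$ paired with $\overline{\omega}^2_i\in\{dy\wedge dz,\,dz\wedge dx,\,dx\wedge dy\}$ rescaled by suitable scalars, and symmetrically for $k=2$. Since all commutation rules are diagonal, $\pi_\omega(\overline{\omega}_i\wedge\omega')$ simply extracts the coefficient, and the two required identities follow coordinatewise using $\nu_\omega^{-1}$. The result is that $\omega$ is an integrating form, and hence $A$ is differentially smooth by Definition~\ref{BrzezinskiSitarz2017Definition2.4}.

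\textbf{Expected obstacle.} The main obstacle is the first step: finding automorphisms $\nu_x,\nu_y,\nu_z$ that are compatible with \emph{both} the algebra relations and the Leibniz-differentiated relations. In particular, it is not yet clear how the inhomogeneous terms $d_\lambda$ and $d_\nu$ are forced to satisfy $d_\nu(\gamma-\beta^{-1})=a_\nu b_\nu=d_\nu(\gamma-1)$. I would first try pure diagonal scalings and derive the conditions symbolically. Only if that fails would I allow affine shifts, as in Remark~\ref{Rosenberg}.
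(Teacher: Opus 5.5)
Your overall architecture is the paper's: a free rank-three $\Omega^1A$ twisted by automorphisms $\nu_x,\nu_y,\nu_z$, skew relations among $dx,dy,dz$ in $\Omega^2A$, the volume form $\omega=dx\wedge dy\wedge dz$, Proposition~\ref{BrzezinskiSitarz2017Lemmas2.6and2.7}(2) with dual bases, and $\operatorname{GKdim}A=3$. The gap is the step that carries all the content, namely the choice of $\nu_x,\nu_y,\nu_z$. You leave it open, and the choices you tentatively propose do not work.

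First, the off-diagonal values are not yours to choose. Because $dx,dy,dz$ are a free basis, differentiating the three relations \emph{forces} them. In the paper's convention $p\,dx=dx\,\nu_x(p)$ one gets $\nu_x(y)=\gamma^{-1}(y-a_\nu)$, $\nu_x(z)=\beta z+a_\mu$, $\nu_y(x)=\gamma x+b_\nu$, $\nu_y(z)=\alpha^{-1}(z-b_\lambda)$, $\nu_z(x)=\beta^{-1}(x-c_\mu)$ and $\nu_z(y)=\alpha y+c_\lambda$. The absence of a $dx$-term in $d(yz-\alpha zy)$, and its analogues for the other two relations, is what forces $a_\lambda=b_\mu=c_\nu=0$. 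So collecting coefficients produces these formulas, not scalar identities. Pure diagonal scalings fail whenever one of $a_\nu,a_\mu,b_\nu,b_\lambda,c_\mu,c_\lambda$ is nonzero, e.g.\ in cases (2)(ii), (2)(v), (2)(vi) and (5)(v); the translations are mandatory. Your sample values $\nu_x(y)=\gamma^{-1}y$, $\nu_x(z)=\beta z$ belong to this right-twisted convention. With your $dx\,a=\nu_x(a)\,dx$ they would be $\gamma y+a_\nu$ and $\beta^{-1}(z-a_\mu)$.

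The only genuine freedom is in $\nu_x(x),\nu_y(y),\nu_z(z)$, and the hypotheses are tailored to one specific choice. Your default $\nu_x(x)=x$ fails. Applying $\nu_x$ to $zx-\beta xz=a_\mu x+c_\mu z+d_\mu$ then requires $(\beta-1)d_\mu=a_\mu c_\mu$. This is false for (2)(ii) and (2)(iv) with $b\neq0$, although both satisfy every hypothesis of the theorem.

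The paper takes $\nu_x(x)=\beta^{-1}x$, $\nu_y(y)=y$, $\nu_z(z)=\beta z$. The homomorphism conditions then give:
\begin{itemize}
\item from $\nu_x$: $c_\mu(\beta-1)=0$, $a_\mu c_\mu=0$, $b_\nu(\beta-1)=0$ and $d_\nu(\gamma-\beta^{-1})=a_\nu b_\nu$;
\item from $\nu_y$: $d_\lambda(\alpha-1)=b_\lambda c_\lambda$ and $d_\nu(\gamma-1)=a_\nu b_\nu$;
\item from $\nu_z$: $b_\lambda(\beta-1)=a_\mu(\beta-1)=0$ and $d_\lambda(\alpha\beta-1)=b_\lambda c_\lambda$.
\end{itemize}
The remaining constraints are Bergman conditions from Remark~\ref{PBWbasis3D}. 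This answers your ``expected obstacle''.

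You also never require the $\nu$'s to commute. Computing $p\,dx\wedge dy$ in two ways gives $dx\wedge dy\,(\nu_y\nu_x(p)-\nu_x\nu_y(p))=0$. Without commutation, $\Omega^2A$ is not free on $dx\wedge dy$, $dx\wedge dz$, $dy\wedge dz$, and the order of composition in $\nu_\omega$ matters. The paper checks commutation on generators, again using the hypotheses together with Bergman's conditions.

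The scalars in $\Omega^2A$ come from applying $d$ to the bimodule relations. For example, $d(x\,dy)=d(dy\,\nu_y(x))$ gives $dy\wedge dx=-\gamma^{-1}dx\wedge dy$.

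Finally, your connectedness argument invokes ``suitable genericity'', which the theorem does not assume. With $\nu_x(x)=\beta^{-1}x$ one has $d(x^k)=dx\,(1+\beta^{-1}+\cdots+\beta^{1-k})\,x^{k-1}$. This vanishes when $\beta\neq1$ and $\beta^k=1$; for instance $d(x^2)=0$ when $\beta=-1$. The paper's formula for $\partial_x$ passes over this point as well.
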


\begin{proof}
To define a first-order differential calculus with generators $dx$, $dy$ and $dz$, we require that the left $A$-action on $\Omega^{1}A$ be compatible with the right $A$-module structure via algebra automorphisms. More precisely, we assume that for every $p\in A$,
\begin{equation}\label{relbimod}
p\,dx_i = dx_i\,\nu_{x_i}(p), \quad \text{for } i=1,2,3,
\end{equation}

where each $\nu_{x_i}$ is an algebra automorphism.

Applying the differential $d$ to the defining relations (\ref{rel3skew}), we obtain that 
\begin{align*}
d(yz-\alpha zy) & =d(a_{\lambda}x+b_{\lambda}y+c_{\lambda}z+d_{\lambda}), \\
d(zx-\beta xz) & =d(a_{\mu}x+b_{\mu}y+c_{\mu}z+d_{\mu}) \ \text{and} \\
d(xy-\gamma yx) & = d(a_{\nu}x+b_{\nu}y+c_{\nu}z+d_{\nu}).
\end{align*}

By using the $\Bbbk$-linearity of $d$, 
\begin{align*}
d(yz)-\alpha\, d(zy) & =a_{\lambda}dx+b_{\lambda}dy+c_{\lambda}dz+d(d_{\lambda}), \\
d(zx)-\beta\, d(xz) & =a_{\mu}dx+b_{\mu}dy+c_{\mu}dz+d(d_{\mu}) \ \text{and} \\
d(xy)-\gamma\, d(yx) & = a_{\nu}dx+b_{\nu}dy+c_{\nu}dz+d(d_{\nu}).
\end{align*}

The Leibniz rule, expression (\ref{relbimod}) and the fact that $d$ vanishes on $\Bbbk$ imply that
\begin{align*}
dy\,z+dz\,\nu_{z}(y)-\alpha\,dz\,y-\alpha\,dy\,\nu_{y}(z) & =a_{\lambda}dx+b_{\lambda}dy+c_{\lambda}dz, \\
dz\,x+dx\,\nu_{x}(z)-\beta\,dx\,z-\beta\,dz\,\nu_{z}(x) & =a_{\mu}dx+b_{\mu}dy+c_{\mu}dz \ \text{and} \\
dx\,y+dy\,\nu_{y}(x)-\gamma\,dy\,x-\gamma\,dx\,\nu_{x}(y) & = a_{\nu}dx+b_{\nu}dy+c_{\nu}dz.
\end{align*}

After grouping by terms, these identities can be rewritten as
\begin{align*}
a_{\lambda}dx+dy\big(z-\alpha\nu_{y}(z)-b_{\lambda}\big)+dz\big(\nu_{z}(y)-\alpha y-c_{\lambda}\big) & =0, \\
dx\big(\nu_{x}(z)-\beta z-a_{\mu}\big)-b_{\mu}dy+dz\big(x-\beta\nu_{z}(x)-c_{\mu}\big) & =0 \ \text{and} \\
dx\big(y-\gamma\nu_{x}(y)-a_{\nu}\big)+dy\big(\nu_{y}(x)-\gamma x-b_{\nu}\big)-c_{\nu}dz & =0.
\end{align*}

In particular, we must have $a_{\lambda}=b_{\mu}=c_{\nu}=0$, and the remaining expressions prescribe the values of the automorphisms $\nu_x$, $\nu_y$, and $\nu_z$ on generators.

Now, we consider the following automorphisms:
\begin{align}
   \nu_{x}(x) = &\ \beta^{-1}x, & \nu_{x}(y) = &\ \gamma^{-1}(y-a_{\nu}), & \nu_{x}(z) = &\ \beta z+a_{\mu}, \label{Auto1} \\ 
    \nu_{y}(x) = &\ \gamma x+b_{\nu}, & \nu_{y}(y) = &\ y, &  \nu_{y}(z) = &\ \alpha^{-1}(z-b_{\lambda}), \label{Auto2} \\
    \nu_{z}(x) = &\ \beta^{-1}(x-c_{\mu}), & \nu_{z}(y) = &\ \alpha y+c_{\lambda}, & \nu_{z}(z) = &\ \beta z. \label{Auto3}
\end{align}

The map $\nu_{x}$ extends to an algebra homomorphism $A\to A$ if and only if the assignments in \eqref{Auto1} respect the relations {\rm (}\ref{rel3skew}{\rm )}, that is,
\begin{align*}
    \nu_{x}(y)\nu_{x}(z)-\alpha\nu_{x}(z)\nu_{x}(y) = &\ b_{\lambda}\nu_{x}(y) + c_{\lambda}\nu_{x}(z)+d_{\lambda}, \\
   \nu_{x}(z)\nu_{x}(x)-\beta\nu_{x}(x)\nu_{x}(z) = &\ a_{\mu}\nu_{x}(x) + c_{\mu}\nu_{x}(z)+d_{\mu} \ \text{and} \\
   \nu_{x}(x)\nu_{x}(y)-\gamma\nu_{x}(y)\nu_{x}(x) = &\ a_{\nu}\nu_{x}(x) + b_{\nu}\nu_{x}(y)+d_{\nu}.
\end{align*}

Equivalently, this yields the new conditions
\begin{align*}
     c_{\mu}(\beta-1) = &\ 0,  \\
     c_{\mu}a_{\mu}= &\ 0, \\
     b_{\nu}(\beta-1)= &\ 0 \quad \text{and} \\
     d_{\nu}(\gamma-\beta^{-1}) = &\ a_{\nu}b_{\nu}.
\end{align*}

In a similar way, $\nu_{y}$ extends to an algebra homomorphism of $A$ if and only if the equalities
\begin{align*}
     d_{\lambda}(\alpha-1) = &\ c_{\lambda}b_{\lambda} \quad \text{and} \\
     d_{\nu}(\gamma-1) = &\ a_{\nu}b_{\nu}
\end{align*}

hold. Finally, requiring that $\nu_{z}$ extend to an algebra homomorphism gives the additional conditions
\begin{align*}
    b_{\lambda}(\beta-1) = &\ 0 \quad \text{and}\\
    a_{\mu}(\beta-1) = &\ 0.
\end{align*}

To impose that the automorphisms commute, that is, 
\begin{align}\label{commuauto}
   \nu_{x} \circ \nu_{y} = &\ \nu_{y} \circ \nu_{x}, \\
   \nu_{x} \circ \nu_{z} = &\  \nu_{z} \circ \nu_{x} \quad \text{and} \\
   \nu_{z} \circ \nu_{y} = &\ \nu_{y} \circ \nu_{z},
\end{align}

we need to verify these equalities on the generators $x$, $y$, and $z$. In particular,
\begin{align}
\nu_{x} \circ \nu_{y}(x) = &\ \gamma \beta^{-1}x+b_{\nu}, \label{comp111} \\
\nu_{y} \circ \nu_{x}(x) = &\ \gamma\beta^{-1}x+\beta^{-1}b_{\nu}, \label{comp111'} \\  
\nu_{x} \circ \nu_{y}(y) = &\ \gamma^{-1}(y-a_{\nu}), \\
\nu_{y} \circ \nu_{x}(y) = &\ \gamma^{-1}(y-a_{\nu}), \label{comp12} \\
 \nu_{x} \circ \nu_{y}(z) = &\ \alpha^{-1}(\beta z+a_{\mu}-b_{\lambda}) \quad \text{and} \label{comp123} \\
 \nu_{y} \circ \nu_{x}(z) = &\ \alpha^{-1}(\beta z -\beta b_{\lambda})+a_{\mu}. \label{comp213}
\end{align}

Comparing \eqref{comp111} with \eqref{comp111'} and \eqref{comp123} with \eqref{comp213}, these pairs coincide if and only if the equalities
$$
b_{\nu}(1-\beta)=0 \quad {\rm and}\quad a_{\mu}(1-\alpha)=b_{\lambda}(1-\beta)
$$

hold. Note that 
\begin{align}
    \nu_{x} \circ \nu_{z}(x) = &\ \beta^{-2}x-\beta^{-1}c_{\mu}, \label{comp13}\\
    \nu_{z} \circ \nu_{x}(x) = &\ \beta^{-2}(x-c_{\mu}), \label{comp21} \\
    \nu_{x} \circ \nu_{z}(y) = &\ \gamma^{-1}\alpha (y- a_{\nu})+c_{\lambda}, \label{comp13'}\\
    \nu_{z} \circ \nu_{x}(y) = &\ \gamma^{-1}(\alpha y +c_{\lambda} - a_{\nu}), \label{comp22} \\
    \nu_{x} \circ \nu_{z}(z) = &\ \beta^2z+\beta a_{\mu} \quad \text{and} \label{comp22'} \\
    \nu_{z} \circ \nu_{x}(z) = &\ \beta^2z+a_{\mu},  \label{comp23}
\end{align}

and the pairs \eqref{comp13}--\eqref{comp21}, \eqref{comp13'}--\eqref{comp22} and \eqref{comp22'}--\eqref{comp23} coincide provided that 
$$
c_{\mu}(\beta-1)=0,\quad c_{\lambda}(1-\gamma)=a_{\nu}(1-\alpha) \quad {\rm and} \quad a_{\mu}(\beta-1)=0.
$$

Finally, 
\begin{align}
    \nu_{z} \circ \nu_{y}(x) = &\ \gamma\beta^{-1}x-\gamma\beta^{-1}c_{\mu}+b_{\nu}, \label{comp31'} \\
    \nu_{y} \circ \nu_{z}(x) = &\ \beta^{-1}(\gamma x+b_{\nu}-c_{\mu}), \label{comp31} \\
    \nu_{z} \circ \nu_{y}(y) = &\ \alpha y+c_{\lambda}, \label{comp32'} \\
    \nu_{y} \circ \nu_{z}(y) = &\ \alpha y+c_{\lambda},  \label{comp32} \\
    \nu_{z} \circ \nu_{y}(z) = &\ \alpha^{-1}(\beta z-b_{\lambda})  \quad \text{and} \label{comp33'}\\
    \nu_{y} \circ \nu_{z}(z) = &\ \beta\alpha^{-1}(z-b_{\lambda}). \label{comp33}
\end{align}

Thus, \eqref{comp31'} and \eqref{comp31}, as well as \eqref{comp33'} and \eqref{comp33}  coincide whenever
$$
b_{\nu}(1-\beta)=c_{\mu}(1-\gamma) \quad {\rm and} \quad b_{\lambda}(\beta-1) = 0.
$$

Let $\Omega^{1}A$ be a free right $A$-module of rank three with generators $dx$, $dy$, and $dz$. For each $p\in A$, define a left $A$-module structure by
\begin{align}
    p\,dx = &\ dx \nu_{x}(p), \notag \\ 
    p\,dy = &\ dy\nu_{y}(p)\quad  \text{and} \notag \\
    p\,dz = &\ dz\nu_{z}(p). \label{relrightmod}
\end{align}

Then the bimodule relations in $\Omega^{1}A$ are
\begin{align}
x\,dx = &\ dx \beta^{-1}x, \notag \\
x\,dy = &\ dy(\gamma x+b_{\nu}), \notag \\
x\,dz = &\ dz\beta^{-1}(x-c_{\mu}), \label{rel1} \\
y\,dx = &\ dx\gamma^{-1}(y-a_{\nu}), \notag \\  
y\,dy = &\ dy\,y, \notag \\
y\,dz = &\ dz(\alpha y+c_{\lambda}), \label{rel2} \\
z\,dx = &\ dx(\beta z+a_{\mu}), \notag \\
z\,dy = &\ dy\alpha^{-1}(z-b_{\lambda}) \quad \text{and} \notag \\
z\,dz = &\ dz\beta z. \label{rel3} 
\end{align}

Our aim is to extend the assignment
\[
x \mapsto dx, \qquad y \mapsto dy  \quad {\rm and} \quad z\mapsto dz
\]

to a $\Bbbk$-linear map $d: A \to \Omega^{1} A$ satisfying the Leibniz rule. This is possible precisely when $d$ is compatible with the nontrivial relations {\rm (}\ref{rel3skew}{\rm )}, i.e.\ when
\begin{align*}
d(yz)-\alpha\, d(zy) & =b_{\lambda}dy+c_{\lambda}dz+d(d_{\lambda}), \\
d(zx)-\beta\, d(xz) & =a_{\mu}dx+c_{\mu}dz+d(d_{\mu}) \ \text{and} \\
d(xy)-\gamma\, d(yx) & = a_{\nu}dx+b_{\nu}dy+d(d_{\nu})
\end{align*}
hold.

Define $\Bbbk$-linear maps
\[
\partial_{x}, \partial_{y}, \partial_{z}: A \rightarrow A
\]

by requiring that
\[
d(a)=dx\,\partial_{x}(a)+dy\,\partial_{y}(a)+dz\,\partial_{z}(a), \quad \text{for all } a \in A.
\]

Since $dx$, $dy$ and $dz$ form a free basis of the right $A$-module $\Omega^1A$, the maps $\partial_x,\partial_y,\partial_z$ are well-defined. Moreover, 
$$
d(a)=0 \quad {\rm if\, and\, only\, if} \quad \partial_{x}(a) = \partial_{y}(a) = \partial_{z}(a)=0.
$$

Using the relations in {\rm (}\ref{relrightmod}{\rm )} together with the definitions of $\nu_{x}$, $\nu_{y}$, and $\nu_{z}$, we obtain that 
\begin{align}
\partial_{x}(x^ky^lz^s) = &\ \sum_{i=0}^{k-1}\beta^{-i}x^{k-1}y^lz^s, \notag \\
\partial_{y}(x^ky^lz^s) = &\ l(\gamma x+b_{\nu})^ky^{l-1}z^s \quad \text{and}  \\
\partial_{z}(x^ky^lz^s) = &\ \sum_{i=0}^{s-1}\beta^{i-k}(x-c_{\mu})^k(\alpha y+c_{\lambda})^lz^{s-1}. \notag
\end{align}

Therefore $d(a)=0$ if and only if $a$ is a scalar multiple of the identity, showing that $(\Omega A,d)$ is connected, where 
$$
\Omega A = \Omega^0 A \oplus \Omega^1 A \oplus \Omega^2 A.
$$

The universal extension of $d$ to higher forms, compatible with {\rm (}\ref{rel1}{\rm )}, {\rm (}\ref{rel2}{\rm )} and {\rm (}\ref{rel3}{\rm )}, yields the following relations in $\Omega^2A$:
\begin{align}
dy\wedge dx = &\ -\gamma^{-1}dx\wedge dy, \\
dz\wedge dx = &\ -\beta\, dx\wedge dz \quad \text{and} \\ 
dz\wedge dy = &\ -\alpha^{-1}dy\wedge dz. \label{relsecond}
\end{align}
Since $\nu_{x}$, $\nu_{y}$, and $\nu_{z}$ commute, no additional relations arise, and hence
\[
\Omega^2A =  dx\wedge dy\,A\oplus dx\wedge dz\,A\oplus dy\wedge dz\,A.
\]

Finally, since $\Omega^3A = \omega A\cong A$ as a right and left $A$-module with $\omega=dx\wedge dy \wedge dz$ and $\nu_{\omega}=\nu_{x}\circ\nu_{y}\circ\nu_{z}$, the form $\omega$ is a volume form for $A$. By Proposition \ref{BrzezinskiSitarz2017Lemmas2.6and2.7} (2), $\omega$ is an integral form upon taking
\begin{align*}
\omega_1^1  = &\ \bar{\omega}_1^1 = dx, \\  
\omega_2^1 = &\ \bar{\omega}_2^1 = dy, \\
\omega_3^1 = &\ \bar{\omega}_3^1 = dz, \\
\omega_1^2 = &\ dy\wedge dz, \\
\omega_2^2 = &\ -\gamma dx\wedge dz, \\ 
\omega_3^2 = &\ \alpha\beta^{-1}dx\wedge dz, \\
\bar{\omega}_1^2 = &\ \gamma \beta^{-1}dy\wedge dz, \\
\bar{\omega}_2^2 = &\ -\alpha dx\wedge dy \quad \text{and} \\ 
\bar{\omega}_3^2 = &\ dx\wedge dy.
\end{align*}

Indeed, for $\omega' := dx\,a + dy\,b + dz\, c$ with $a, b, c \in \Bbbk$, Proposition \ref{BrzezinskiSitarz2017Lemmas2.6and2.7} (2) gives
\begin{align*}
    \sum_{i=1}^{3}\omega_{i}^{1}\pi_{\omega}(\bar{\omega}_i^{2}\wedge \omega') = &\ dx\,\pi_{\omega}(\gamma\beta^{-1}a\,dy\wedge dz\wedge dx) \\
    & + dy\,\pi_{\omega}(-\alpha b\,dx\wedge dz\wedge dy) \\
    & + dz\,\pi_{\omega}(c\,dx\wedge dy\wedge dz) \\
    = &\ dx\,a+dy\,b+dz\,c = \omega'.
\end{align*}

On the other hand, if 
$$
\omega'' := dx\wedge dy\, a+dx\wedge dz\,b+dy\wedge dz\,c \quad {\rm with} \, \, a, b, c\in \Bbbk,
$$ 

then
\begin{align*}
\sum_{i=1}^{3}\omega_{i}^{2}\pi_{\omega}(\bar{\omega}_i^{1}\wedge \omega'') = &\  dy\wedge dz\,\pi_{\omega}(c\,dx\wedge dy \wedge dz) \\
&\ - \gamma dx\wedge dz\,\pi_{\omega}(b\,dy\wedge dx\wedge dz) \\
    &\ + \alpha\beta^{-1}dx\wedge dy\,\pi_{\omega}(a\,dz \wedge dx \wedge dy) \\ 
    = &\ dx\wedge dy\,a+dx\wedge dz\,b+dy\wedge dz\,c = \omega''.
\end{align*}

As was shown above, all forms of degrees $1$ and $2$ are generated by $\omega_i^j$ and $\bar{\omega}_i^{3-j}$ for $j=1,2$ and $i=1,2,3$. Therefore, Proposition \ref{BrzezinskiSitarz2017Lemmas2.6and2.7} (2) ensures that $\omega$ is an integral form. Finally, Proposition \ref{integrableequiva} implies that $(\Omega A, d)$ is an integrable differential calculus of degree $3$. Since ${\rm GKdim}(A) = 3$ \cite[Section 8.7]{Fajardoetal2020}, it follows that $A$ is differentially smooth.
\end{proof}

\begin{theorem}\label{Secondtheoremsmoothness3-dimensionalalgebras}
If one of the conditions $a_{\lambda}\not = 0$, $b_{\mu}\not=0$ or $c_{\nu}\not=0$ holds, then there are no one-dimensional connected integrable calculi over $A$.
\end{theorem}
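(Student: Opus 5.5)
This is a proof by contradiction, modelled on \cite[Example 2.5]{BrzezinskiSitarz2017} and on the analogous arguments in \cite{BrzezinskiLomp2018}. Suppose $(\Omega A,d)$ is a one-dimensional connected integrable calculus over $A$. By Proposition \ref{integrableequiva}, $\Omega^1A$ then admits a volume form $\omega$, so $\Omega^1A=\omega A=A\omega$. Let $\nu_\omega$ be the associated automorphism from (\ref{BrzezinskiSitarz2017(2.2)}), so that $a\omega=\omega\nu_\omega(a)$ for all $a\in A$. Write
\[
dx=\omega p_x,\qquad dy=\omega p_y,\qquad dz=\omega p_z,\qquad p_x,p_y,p_z\in A.
\]
Since $\omega$ is a free generator of the right $A$-module $\Omega^1A$, applying $d$ and the Leibniz rule to the relations (\ref{rel3skew}) turns each of them into an identity in $A$.

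For instance, the first relation gives
\[
p_y z+\nu_\omega(y)p_z-\alpha p_z y-\alpha\nu_\omega(z)p_y=a_\lambda p_x+b_\lambda p_y+c_\lambda p_z,
\]
and the other two relations give analogous identities. Assume, say, $a_\lambda\neq 0$; the cases $b_\mu\neq0$ and $c_\nu\neq0$ follow by the cyclic symmetry $x\to y\to z$ of the three relations. The plan is to compare the ``highest terms'' of both sides with respect to the PBW basis. We use the standard filtration by total degree (or a lexicographic degree) on $\{x^iy^jz^l\}$. The associated graded algebra is the quantum polynomial ring $\Bbbk_{\alpha,\beta,\gamma}[x,y,z]$, which is a domain.

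First we show that $\nu_\omega$ preserves degree on the generators. Because $A$ is a domain with PBW basis and $\nu_\omega$ is an automorphism, $\nu_\omega$ must send $x,y,z$ to elements of degree one; we argue this by comparing degrees in $\nu_\omega(x)\nu_\omega(\cdot)$ and using invertibility. Next, connectedness forces $\ker d=\Bbbk$, so $p_x,p_y,p_z$ cannot all vanish. In fact none of them can be zero: $x,y,z$ are not scalars, and if, say, $p_x=0$ then $dx=0$ and $x\in\ker d=\Bbbk$, a contradiction. With this in hand, we take the top-degree components in the displayed identity. The left side has the form (linear)$\cdot p - \alpha\,$(linear)$\cdot p$, while the right side contains $a_\lambda p_x$ of strictly lower degree unless cancellations occur. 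Running the same argument on all three identities should force the leading terms of $p_x,p_y,p_z$ to satisfy homogeneous $q$-commutation constraints. Combined with the term $a_\lambda p_x\neq 0$, this should produce a degree mismatch: either $\deg p_x=\deg p_y+1$ is forced together with the reverse inequality from another relation, or a nonzero leading term is forced to vanish. Either outcome contradicts connectedness.

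The main obstacle is this degree bookkeeping. The automorphism $\nu_\omega$ is not known explicitly, and leading terms can cancel when $\alpha,\beta,\gamma$ take special values. The first thing to try is to choose a weighted degree, with weights on $x,y,z$, under which the term $a_\lambda x$ in $yz-\alpha zy$ is the unique highest-weight contribution. Such weights exist because the Bergman condition $a_\lambda(\gamma-\beta)=0$ from Remark \ref{PBWbasis3D} gives $\gamma=\beta$ when $a_\lambda\ne0$, which restricts the possible structure. If this direct approach gets stuck, the fallback is to reduce to the one-dimensional picture of \cite[Example 2.5]{BrzezinskiSitarz2017}: show that $\Omega^1A$ would have to be generated by a single form while $A$ has the three algebraically independent directions $x,y,z$ in its associated graded algebra. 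Proposition \ref{BrzezinskiSitarz2017Lemmas2.6and2.7}(1) then forces $\Omega^1A\cong A$ to be incompatible with a nonzero $a_\lambda$-twisted relation.
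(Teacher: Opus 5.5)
Your proposal reads the statement literally and tries to exclude a calculus with $\Omega^1A=\omega A=A\omega$ and $\Omega^2A=0$. The decisive step, the ``degree mismatch'', is only announced (``should force'', ``should produce''). It cannot be supplied, because the literal claim is false.

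Take case (5)(iv) over a field of characteristic zero: $yz-zy=-y$, $zx-xz=x+y$, $xy-yx=0$, so $b_\mu=1$.
\begin{itemize}
\item The assignment $\partial(x)=x$, $\partial(y)=y$, $\partial(z)=1$ maps the relators $yz-zy+y$, $zx-xz-x-y$ and $xy-yx$ to $1$, $1$ and $2$ times themselves. Hence it defines a derivation of $A$ with $\partial(x^iy^jz^l)=(i+j)x^iy^jz^l+l\,x^iy^jz^{l-1}$.
\item Comparing the top power of $z$ gives $\ker\partial=\Bbbk$, and $\partial(z)=1$.
\item Set $\Omega^1A=\omega A$ with $a\omega=\omega a$, $da=\omega\,\partial(a)$ and $\Omega^2A=0$. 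This is a connected one-dimensional calculus with volume form $\omega=dz$ and $\nu_\omega={\rm id}$.
\item The map $\ell^{0}_{\pi_\omega}$ is the canonical isomorphism $A\cong\mathcal{I}_1A$, so the calculus is integrable by Proposition~\ref{integrableequiva}(4).
\end{itemize}
In your notation, $p_x=x$, $p_y=y$, $p_z=1$ satisfy all three identities exactly, so no leading-term comparison can give a contradiction. A cyclic relabelling of the generators turns this into an example with $a_\lambda\neq0$, so the failure is not confined to the cases you reduce away by symmetry. The same example shows that your fallback intuition (three independent directions in $A$ versus $\Omega^1A\cong A$) gives no obstruction. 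Separately, your lemma that $\nu_\omega$ maps $x,y,z$ to degree-one elements is false: in (5)(iv), $x\mapsto x$, $y\mapsto y$, $z\mapsto z+x^2$ is an automorphism.

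The paper's proof has a different target and does no degree bookkeeping. It applies $d$ to $yz-\alpha zy=a_\lambda x+b_\lambda y+c_\lambda z+d_\lambda$ and observes that $a_\lambda\,dx$ lies in the sub-bimodule generated by $dy$ and $dz$. It then infers that $\Omega^1A$ is generated by two elements and hence $\Omega^3A=0$; this implication is asserted rather than argued. From this it concludes that there is no calculus of dimension ${\rm GKdim}(A)=3$, i.e.\ no differential smoothness, which is what the $\star$ entries of Table~\ref{table1} record. So the paper effectively reads ``one-dimensional'' as ``three-dimensional''. The claim you set out to prove is not the one the paper's argument addresses, and in its literal form it does not hold.
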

\begin{proof}
By contradiction. Suppose that $A$ has a first order differential calculus $(\Omega A, d)$. Without loss of generality, we consider the case $a_{\lambda}\not =0$. Since $d$ must be compatible with the relations {\rm (}\ref{rel3skew}{\rm )}, then we get that
\begin{equation*}
    dy z-\alpha ydz = a_{\lambda}dx +b_{\lambda}dy + c_{\lambda}dz,
\end{equation*}
whence $dx$ is generated by $dy$ and $dz$. This means that $\Omega^1A$ is generated by two elements and $\Omega^3A=\Omega^1A\wedge\Omega^1A\wedge\Omega^1A=0$, i.e. there is no third-order calculus. Since ${\rm GKdim}(A) = 3$, the assertion follows.
\end{proof}

Table \ref{table1} contains the parameter values for each one of the $15$ algebras. By Theorems \ref{Firsttheoremsmoothness3-dimensionalalgebras} and \ref{Secondtheoremsmoothness3-dimensionalalgebras}, the symbols $\checkmark$ and $\star$ denote a positive and a negative answer, respectively, on differential smoothness of $3$-dimensional skew polynomial algebras.

\begin{table}[H]
\caption{Parameter values for $3$-dimensional skew polynomial algebras}
\label{table1}
\centering
\resizebox{\textwidth}{!}{%
\begin{tabular}{|*{17}{c|}}
\hline
 & $\alpha$ & $\beta$ & $\gamma$ & $a_{\lambda}$ & $b_{\lambda}$ & $c_{\lambda}$ & $d_{\lambda}$ & $a_{\mu}$ & $b_{\mu}$ & $c_{\mu}$ & $d_{\mu}$ & $a_{\nu}$ & $b_{\nu}$ & $c_{\nu}$ & $d_{\nu}$ & D.S. \\ \hline\hline
(1) & $\alpha$ & $\beta$ & $\gamma$ & 0 & 0 & 0 & 0 & 0 & 0 & 0 & 0 & 0 & 0 & 0 & 0 & $\checkmark$ \\ \hline
(2)(i) & $1$ & $\beta$ & $1$ & 0 & 0 & 1 & 0 & 0 & 1 & 0 & 0 & 1 & 0 & 0 & 0 & $\star$ \\ \hline
(2)(ii) &  $1$ & $\beta$ & $1$  & 0 & 0 & 1 & 0 & 0 & 0 & 0 & $b$ & 1 & 0 & 0 & 0 & $\checkmark$ \\ \hline
(2)(iii) &  $1$ & $\beta$ & $1$  & 0 & 0 & 0 & 0 & 0 & 1 & 0 & 0 & 0 & 0 & 0 & 0 & $\star$ \\ \hline
(2)(iv) &  $1$ & $\beta$ & $1$  & 0 & 0 & 0 & 0 & 0 & 0 & 0 & $b$ & 0 & 0 & 0 & 0 & $\checkmark$ \\ \hline
(2)(v) &  $1$ & $\beta$ & $1$  & 0 & 0 & $a$ & 0 & 0 & 0 & 0 & 0 & 1 & 0 & 0 & 0 & $\checkmark$ \\ \hline
(2)(vi) &  $1$ & $\beta$ & $1$  & 0 & 0 & 1 & 0 & 0 & 0 & 0 & 0 & 0 & 0 & 0 & 0 & $\checkmark$ \\ \hline
(3)(i) & $\alpha$ & $\beta$ & $\alpha$ & 0 & 0 & 0 & 0 & 0 & 1 & 0 & $b$ & 0 & 0 & 0 & 0 & $\star$ \\ \hline
(3)(ii) & $\alpha$ & $\beta$ & $\alpha$ & 0 & 0 & 0 & 0 & 0 & 0 & 0 & $b$ & 0 & 0 & 0 & 0 & $\checkmark$ \\ \hline
(4) & $\alpha$ & $\alpha$ & $\alpha$ & $a_1$ & 0 & 0 & $b_1$ & 0 & $a_2$ & 0 & $b_2$ & 0 & 0 & $a_3$ & $b_3$ & $\star$ \\ \hline
(5)(i) & $1$ & $1$ & $1$ & 1 & 0 & 0 & 0 & 0 & 1 & 0 & 0 & 0 & 0 & 1 & 0 & $\star$ \\ \hline
(5)(ii) &  $1$ & $1$ & $1$  & 0 & 0 & 0 & 0 & 0 & 0 & 0 & 0 & 0 & 0 & 1 & 0 & $\star$ \\ \hline
(5)(iii) &  $1$ & $1$ & $1$  & 0 & 0 & 0 & 0 & 0 & 0 & 0 & 0 & 0 & 0 & 0 & $b$ & $\checkmark$ \\ \hline
(5)(iv) &  $1$ & $1$ & $1$  & 0 & -1 & 0 & 0 & 1 & 1 & 0 & 0 & 0 & 0 & 0 & 0 & $\star$ \\ \hline
(5)(v) &  $1$ & $1$ & $1$  & 0 & 0 & $a$ & 0 & 0 & 0 & 1 & 0 & 0 & 0 & 0 & 0 & $\checkmark$ \\ \hline
\end{tabular}%
}
\end{table}

\begin{remark}
In \cite{ReyesSarmiento2022}, the differential smoothness of the algebra of type (5)(v) cannot be settled by the method developed therein. This arises from the fact that the defining relations for case (5)(v) are taken from Rosenberg's statement in \cite[Theorem C4.3.1, case (e)(v), p. 101]{Rosenberg1995}. As explained in Remark \ref{Rosenberg}, that statement contains a typo: the relation $zx-xz=x$ must be replaced with $zx-xz=z$. Consequently, the arguments in \cite{ReyesSarmiento2022} that rely on the misprinted presentation do not apply to the genuine algebra of type (5)(v); using the corrected relations, its differential smoothness is settled in the present work.
\end{remark}

\section{Future work}\label{Futurework}

Jordan \cite{Jordan1993} introduced {\em certain iterated skew polynomial rings} $K[x; \sigma][y; \sigma^{-1}, \delta]$ in two indeterminates over a commutative ring $K$ which include the universal enveloping algebra of the Lie algebra $\mathfrak{sl}(2)$ and several quantum groups. Two years later, in \cite{Jordan1995} he extended this class of rings by considering an extra parameter $\rho \in \Bbbk \ \backslash \ \{0\}$ and include the quantized Weyl algebra, the universal enveloping algebra of the Dispin Lie superalgebra, and an algebra introduced by Woronowicz \cite{Woronowicz1987}. Let us see the details.

\begin{definition}[{\cite[Section 1.1]{Jordan1995}}]\label{Jordan1995Section1.1}
Let $K$ be a finitely generated commutative algebra over an algebraically closed field $\Bbbk$, let $\sigma$ be a $\Bbbk$-automorphism of $K$, and consider $u\in K$ and $\rho \in \Bbbk \ \backslash \ \{0\}$. Form the skew polynomial ring of automorphism type $K[x;\sigma]$ and extend $\sigma$ to $K[x;\sigma]$ by setting 
$$
\sigma(x) = \rho^{-1}x \quad {\rm whence} \quad \sigma^{-1}(x) = \rho x.
$$

There is an $\sigma^{-1}$-derivation $\delta$ of $K[x;\sigma]$ such that $\delta(K) = 0$ and $\delta(x) = u - \rho \sigma(u)$ \cite[p. 41]{Cohn1985} (see also \cite[Section 2.8]{GoodearlLetzter1994}). Consider the skew polynomial ring
\begin{align*}
K[x;\sigma][y;\sigma^{-1},\delta], 
\end{align*}

with defining relations given by 
\begin{align}
    xk = &\, \sigma(k)x, \\ 
    yk = &\, \sigma^{-1}(k)y, \quad {\rm for}\ k\in K, \, {\rm and} \\
    yx = &\, \sigma^{-1}(x)y + \delta(x) = \rho xy + u - \rho \sigma(u).
\end{align}
\end{definition}

He stated \cite[Section 1]{Jordan1995} that not all noncommutative polynomial $\Bbbk$-algebras in three indeterminates considered by Bell and Smith \cite{BellSmith1990} are iterated skew polynomial rings over $\Bbbk$, but these can be obtained from the construction in Definition \ref{Jordan1995Section1.1} with $K = \Bbbk[t]$. Also, he mentioned that their objects are related with the class of iterated skew polynomial rings in two indeterminates defined by Goodearl and Letzter \cite{GoodearlLetzter1994}, called $q$-{\em skew polynomial rings}, so that a natural task is to investigate the differential smoothness of both families of skew polynomial rings.

\section{Declarations}

The authors have no conflict of interest to disclose.

\end{document}